\documentclass[a4paper,oneside,11pt]{article}%
\usepackage{makeidx}
\usepackage[english]{babel}
\usepackage{amsmath}
\usepackage{amsfonts}
\usepackage{amssymb}
\usepackage{stmaryrd}
\usepackage{graphicx}
\usepackage{mathrsfs}
\usepackage[colorlinks,linkcolor=red,anchorcolor=blue,citecolor=blue,urlcolor=blue]{hyperref}
\usepackage[symbol*,ragged]{footmisc}

\allowdisplaybreaks[4]
\providecommand{\U}[1]{\protect\rule{.1in}{.1in}}
\providecommand{\U}[1]{\protect \rule{.1in}{.1in}}

\pagenumbering{arabic}
\setlength{\textwidth}{145mm}
\setlength{\textheight}{225mm}
\headsep=20pt \topmargin=-5mm \oddsidemargin=0.46cm
\evensidemargin=0.46cm \raggedbottom
\newtheorem{theorem}{Theorem}[section]

\newtheorem{lemma}[theorem]{Lemma}

\newenvironment{proof}[1][Proof]{\noindent \textbf{#1.} }{\  \rule{0.5em}{0.5em}}
\numberwithin{equation}{section}

\begin{document}

\title{A Diophantine inequality with four prime variables }

\author{Min Zhang\footnotemark[1]\,\,\,\, \, \& \,\,Jinjiang Li\footnotemark[2] \vspace*{-4mm} \\
$\textrm{\small Department of Mathematics, China University of Mining and Technology}^{*\,\dag}$
                    \vspace*{-4mm} \\
     \small  Beijing 100083, P. R. China  }

\footnotetext[2]{Corresponding author. \\
    \quad\,\, \textit{ E-mail addresses}:
     \href{mailto:min.zhang.math@gmail.com}{min.zhang.math@gmail.com} (M. Zhang),
     \href{mailto:jinjiang.li.math@gmail.com}{jinjiang.li.math@gmail.com} (J. Li).   }

\date{}
\maketitle

{\textbf{Abstract}}: Let $N$ be a sufficiently large real number. In this paper, it is proved that, for $1<c<\frac{1193}{889}$, the following
Diophantine inequality
\begin{equation*}
    \big|p_1^c+p_2^c+p_3^c+p_4^c-N\big|<\log^{-1}N
\end{equation*}
is solvable in prime variables $p_1,p_2,p_3,p_4$, which improves the result of Mu \cite{Mu-Quanwu-2015}.

{\textbf{Keywords}}: Diophantine equation; Waring--Goldbach problem; prime variables; exponential sum

{\textbf{MR(2010) Subject Classification}}: 11J25, 11P32, 11P55, 11L07, 11L20

\section{Introduction and main result}
Let $k\geqslant1$ be a fixed integer and $N$ a sufficiently large integer. The famous Waring--Goldbach problem is to study the solvability of the following Diophantine equality
\begin{equation}\label{Waring-Goldbach-general}
  N=p_1^k+p_2^k+\cdots+p_r^k
\end{equation}
in prime variables $p_1,p_2,\dots,p_k$. For linear case, in 1937, Vinogradov \cite{Vinogradov-1937} proved that every sufficiently large odd integer $N$
can be written as the sum of three primes. For $k=2$, in 1938, Hua \cite{Hua-1938} proved that the equation (\ref{Waring-Goldbach-general}) is solvable for $r=5$ and sufficiently large integer $N$ satisfying $N\equiv 5\pmod {24}$.

In 1952, Piatetski-Shapiro \cite{Piatetski-Shapiro-1952} studied the following analogue of the Waring--Goldbach problem:
Suppose that $c>1$ is not an integer, $\varepsilon$ is a small positive number, and $N$ is a sufficiently large real number. Denote
by $H(c)$ the smallest natural number $r$ such that the following Diophantine inequality
\begin{equation}\label{Diophantine-inequality-general}
  |p_1^c+p_2^c+\cdots+p_r^c-N|<\varepsilon
\end{equation}
is solvable in primes $p_1,p_2,\dots,p_r$, then it was proved in \cite{Piatetski-Shapiro-1952} that
\begin{equation*}
  \limsup_{c\to+\infty}\frac{H(c)}{c\log c}\leqslant4.
\end{equation*}
Also, in \cite{Piatetski-Shapiro-1952}, Piatetski--Shapiro considered the case $r=5$ in (\ref{Diophantine-inequality-general}) and proved that $H(c)\leqslant5$
for $1<c<3/2$. Later, the upper bound $3/2$ for $H(c)\leqslant5$ was improved successively to
\begin{equation*}
  \frac{14142}{8923},\quad \frac{1+\sqrt{5}}{2}, \quad\frac{81}{40}, \quad\frac{108}{53},\quad 2.041, \quad\frac{665576}{319965}
\end{equation*}
by Zhai and Cao \cite{Zhai-Cao-2003}, Garaev \cite{Garaev-2003}, Zhai and Cao \cite{Zhai-Cao-2007}, Shi and Liu \cite{Shi-Liu-2013},
Baker and Weingartner \cite{Baker-Weingartner-2013}, Zhang and Li \cite{Zhang-Li-2018}, respectively. 

From these results and Goldbach--Vinogradov theorem, it is reasonable to conjecture that if $c$ is
near to $1$, then the Diophantine inequality (\ref{Diophantine-inequality-general}) is solvable for $r=3$. This conjecture was first 
established by Tolev \cite{Tolev-PhD-thesis} for $1<c<\frac{27}{26}$. Since then, the range of $c$ was enlarged to
\begin{equation*}
  \frac{15}{14},\quad \frac{13}{12},\quad \frac{11}{10},\quad \frac{237}{214},\quad \frac{61}{55},\quad \frac{10}{9},\quad \frac{43}{36}
\end{equation*}
by Tolev \cite{Tolev-1992}, Cai \cite{Cai-1996}, Cai \cite{Cai-1999} and Kumchev and Nedeva \cite{Kumchev-Nedeva-1998} independently, Cao and Zhai \cite{Cao-Zhai-2002},
Kumchev \cite{Kumchev-1999}, Baker and Weingartner \cite{Baker-Weingartner-2014}, Cai \cite{Cai-2018}, successively and respectively. 

Combining Tolev's method and  the techniques of estimates on exponential sums of Fouvry and Iwaniec, in 2003, Zhai and Cao \cite{Zhai-Cao-2003} proved that
$H(c)\leqslant4$ for $1<c<\frac{81}{68}$. Later, the range of $c$ for $H(c)\leqslant4$ was enlarged to $1<c<\frac{97}{81}$ by
Mu \cite{Mu-Quanwu-2015}.

 In this paper, motivated by \cite{Cai-2018}, we shall continue to improve the result of Mu and establish the following theorem.

\begin{theorem}\label{Theorem-four-primes-variables}
   Suppose that $1<c<\frac{1193}{889}$, then for any sufficiently large real number $N$, the following Diophantine inequality
\begin{equation}\label{qu-5}
   \big|p_1^c+p_2^c+p_3^c+p_4^c-N\big|<\log^{-1}N
\end{equation}
is solvable in primes $p_1,p_2,p_3,p_4$.
\end{theorem}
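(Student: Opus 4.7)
The plan is to attack \eqref{qu-5} via the Davenport--Heilbronn variant of the circle method, tailored to Diophantine inequalities. I would set $X = (N/4)^{1/c}$, introduce a weighted prime exponential sum
\[
    S(\alpha) = \sum_{X < p \leqslant 2X}(\log p)\,e(\alpha p^{c}),
\]
and choose a smooth non-negative kernel $K(\alpha)$, for instance
\(
K(\alpha)=\bigl(\frac{\sin(\pi\varepsilon\alpha)}{\pi\alpha}\bigr)^{2}\varepsilon^{-1}
\)
with $\varepsilon = \log^{-1}N$, so that $\widehat{K}$ is a non-negative tent supported in $|x|<\varepsilon$ and $\widehat{K}(x)\gg 1$ for $|x|\leqslant \varepsilon/2$. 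Then the positivity of
\[
    \mathcal I \;=\; \int_{-\infty}^{\infty} S(\alpha)^{4}\,e(-\alpha N)\,K(\alpha)\,d\alpha
\]
will force a solution of \eqref{qu-5}. I split $\mathbb{R} = \mathfrak{E}_{1}\cup\mathfrak{E}_{2}\cup\mathfrak{e}$ into the major arc $\mathfrak{E}_{1}=\{|\alpha|\leqslant\tau\}$, the minor arc $\mathfrak{E}_{2}=\{\tau<|\alpha|\leqslant T\}$, and the trivial tail $\mathfrak{e}=\{|\alpha|>T\}$, for parameters $\tau=X^{-c+\delta}$ and $T=X^{c-1+\delta}$ (or whatever sharpest thresholds the minor-arc estimate permits).

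On $\mathfrak{E}_{1}$, partial summation against the Prime Number Theorem with classical error term gives the standard asymptotic $S(\alpha) = I(\alpha) + O(X e^{-c_{0}\sqrt{\log X}})$ with $I(\alpha)=\int_{X}^{2X}e(\alpha t^{c})\,dt$. Substituting into $\mathcal I$ produces the main term of order $X^{4-c}\varepsilon$, which is the expected singular integral contribution. On $\mathfrak{e}$, I would use the trivial $S(\alpha)\ll X$ together with the rapid decay $K(\alpha)\ll \varepsilon^{-1}\alpha^{-2}$ to bury this piece. What remains is the decisive minor-arc bound
\[
    \int_{\mathfrak{E}_{2}} |S(\alpha)|^{4}\,K(\alpha)\,d\alpha
    \;\ll\; \varepsilon\,X^{4-c}\,(\log X)^{-A},
\]
which I would reduce via H\"older to the product of a supremum bound $\sup_{\mathfrak{E}_{2}}|S(\alpha)|\ll X^{1-\eta}$ and an $L^{2}$ mean-value estimate $\int |S(\alpha)|^{2}K(\alpha)\,d\alpha\ll \varepsilon X^{2-c}\log X$ obtained by Parseval and a standard PNT-type count of near-coincidences of $p_{1}^{c}-p_{2}^{c}$.

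The main obstacle, as in all such problems, is the pointwise minor-arc bound for $S(\alpha)$, and this is where the specific constant $\frac{1193}{889}$ is forced. Following the Cai \cite{Cai-2018} template I would apply Heath--Brown's identity to decompose $S(\alpha)$ into $O(\log^{c_1} X)$ Type~I sums $\sum_{m\sim M}\sum_{n\sim N}a_{m} e(\alpha(mn)^{c})$ and Type~II sums with both variables in suitable ranges, where the parameters satisfy $MN\asymp X$. For the Type~I sums I would use one Weyl-differencing followed by an exponent pair (or Kolesnik-type third-derivative) estimate on the inner sum; for the more delicate Type~II sums I would employ Robert--Sargos' double large sieve together with the Fouvry--Iwaniec exponential-sum estimates (exactly those exploited in \cite{Zhai-Cao-2003}, \cite{Mu-Quanwu-2015}, and refined in \cite{Cai-2018}), plus a Cauchy--Schwarz splitting to redistribute the variables. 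Optimising the Type~I/Type~II cut-off $N \in [X^{\theta_1},X^{\theta_2}]$ against the range of $c$ is where the arithmetic $\frac{1193}{889}$ emerges; making the optimisation tight, and in particular pushing the Type~II treatment beyond the Mu range, is the genuinely technical part of the argument.
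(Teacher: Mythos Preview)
Your overall architecture is correct and matches the paper: Davenport--Heilbronn circle method, the splitting into a major arc $|\alpha|\leqslant\tau$, a minor arc, and a trivial tail, the major-arc replacement $S(\alpha)=I(\alpha)+O(\cdots)$, and the pointwise minor-arc bound for $S(\alpha)$ via Heath--Brown's identity followed by Type~I/Type~II estimates. Indeed the paper obtains $|S(\alpha)|\ll X^{2515/2667+\eta}$ for $\tau<|\alpha|<K$ using exactly the ingredients you name (Fouvry--Iwaniec differencing for Type~II, exponent pairs and Sargos--Wu for Type~I).

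The genuine gap is your minor-arc estimate. You propose to bound
\[
\int_{\mathfrak{E}_2}|S(\alpha)|^{4}K(\alpha)\,d\alpha\;\leqslant\;\Bigl(\sup_{\mathfrak{E}_2}|S(\alpha)|\Bigr)^{2}\int|S(\alpha)|^{2}K(\alpha)\,d\alpha .
\]
With $|S(\alpha)|\ll X^{\theta}$ and the diagonal $L^{2}$ bound $\int|S|^{2}K\ll X^{1+\eta}$ (only $p_{1}=p_{2}$ contributes, so your claimed $\varepsilon X^{2-c}$ is not correct in your normalisation), this yields the constraint $2\theta+1<4-c$, i.e.\ $c<3-2\theta$. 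Plugging in the paper's $\theta=2515/2667$ gives only $c<2971/2667\approx1.114$, weaker even than Mu's $97/81$, and to reach $1193/889$ one would need $\theta<737/889\approx0.829$, which is far beyond what any known Type~II technology delivers. So the simple $\sup\times L^{2}$ reduction cannot produce the stated exponent.

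What the paper actually does --- and this is the Cai \cite{Cai-2018} idea you cite but do not implement --- is to open one factor of $S(\alpha)$, extend the prime sum to all integers, and apply Cauchy--Schwarz over that integer variable. This produces a double integral containing the unweighted sum $\mathcal{T}(x-y)=\sum_{X<n\leqslant 2X}e(n^{c}(x-y))$, which is then estimated by the exponent pair $BA^{3}BA^{2}BABABA^{2}BABAB(0,1)=\bigl(\tfrac{1731}{4492},\tfrac{591}{1123}\bigr)$. The resulting bound combines three inputs: the pointwise $\sup|S|^{3}$ on the near-diagonal $|x-y|\leqslant X^{-c}$, the mean value $\int|S^{3}\Phi|\ll X^{(5-c)/2+\eta}$ off-diagonal, and the saving from $\mathcal{T}(x-y)$. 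It is precisely the interplay of these three that forces $c<1193/889$; your plan has only the first two.

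A secondary issue: with the Fej\'er kernel's $\alpha^{-2}$ decay, your choice $T=X^{c-1+\delta}$ is much too small for the trivial tail (one needs $T\gg X^{c}$ to make $X^{4}\varepsilon/T\ll\varepsilon X^{4-c}$). The paper sidesteps this by taking a kernel that is $r=[\log X]$ times differentiable, whose transform decays faster than any fixed power, so that one may cut at $K=\log^{10}X$. This is repairable, but as written your tail does not close.
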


\smallskip 
\textbf{Remark } In order to compare our result with the results of Mu \cite{Mu-Quanwu-2015} and
Zhai and Cao \cite{Zhai-Cao-2003}, we list the numerical result as follows
\begin{equation*}
   \frac{1193}{889}=1.341957255\cdots;\,\quad\frac{97}{81}=1.197530864\cdots;\,\,\quad  \frac{81}{68}=1.191176471\cdots.
\end{equation*}

\smallskip
\textbf{Notation.}
Throughout this paper, we suppose that $1<c<\frac{1193}{889}$. Let $p$, with or without subscripts, always denote a prime number. $\eta$
always denotes an arbitrary small positive constant, which may not be the same at different occurrences; $N$ always denotes a sufficiently large real number.
As usual, we use $\Lambda(n)$ to denote von Mangoldt's function; $e(x)=e^{2\pi i x}$; $f(x)\ll g(x)$ means that $f(x)=O(g(x))$; $f(x)\asymp g(x)$ means that $f(x)\ll g(x)\ll f(x)$.

We also define
\begin{align*}
 & \,\,  X=\frac{1}{2}\bigg(\frac{2N}{5}\bigg)^{\frac{1}{c}},\qquad \varepsilon=\log^{-2}X,\qquad K=\log^{10}X, \qquad \tau=X^{1-c-\eta},   \\
 & \,\,  S(x)=\sum_{X<p\leqslant2X}(\log p)e\big(p^{c}x\big), \quad  I(x)=\int_X^{2X}e(t^cx)\mathrm{d}t,\quad\mathcal{T}(x)=\sum_{X<n\leqslant2X}e(n^cx).
\end{align*}

\section{Preliminary Lemmas}
In this section, we shall give some preliminary lemmas, which are necessary in the proof of Theorem \ref{Theorem-four-primes-variables}.

\begin{lemma}\label{xiaobei-lemma}
   Let $a,b$ be real numbers, $0<b<a/4,$ and let $r$ be a positive integer. Then there exists a function $\phi(y)$ which is $r$ times
   continuously differentiable and such that
  \begin{equation*}
    \left\{
      \begin{array}{cll}
          \phi(y)=1,    & &  \textrm{if \quad} |y|\leqslant a-b, \\
          0<\phi(y)<1,  & &  \textrm{if \quad} a-b<|y|< a+b, \\
          \phi(y)=0,    & &  \textrm{if \quad} |y|\geqslant a+b,
      \end{array}
    \right.
  \end{equation*}
  and its Fourier transform
   \begin{equation*}
      \Phi(x)=\int_{-\infty}^{+\infty} e(-xy)\phi(y)\mathrm{d}y
   \end{equation*}
   satisfies the inequality
   \begin{equation*}
      \left|\Phi(x)\right|\leqslant\min\left(2a,\frac{1}{\pi|x|},\frac{1}{\pi|x|}\left(\frac{r}{2\pi|x|b}\right)^r\right).
   \end{equation*}
\end{lemma}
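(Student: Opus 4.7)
The plan is to build $\phi$ by convolving the characteristic function of $[-a,a]$ with an $r$-fold self-convolution of a narrow rectangular kernel; the Fourier transform then factorizes into sinc terms, and the three stated bounds follow from the elementary inequality $|\sin u| \leqslant \min(1,|u|)$.

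Concretely, I would set $\delta = b/r$, define the kernel $\eta(y) = (2\delta)^{-1}\mathbf{1}_{[-\delta,\delta]}(y)$, and let $\eta^{*r}$ denote its $r$-fold convolution with itself. This $\eta^{*r}$ is a non-negative, even, piecewise-polynomial B-spline of order $r$, supported in $[-b, b]$, of total mass $1$, and of smoothness class $C^{r-2}$. I would then take
\[
\phi := \eta^{*r} * \mathbf{1}_{[-a,a]}.
\]
Using the distributional derivative of $\mathbf{1}_{[-a,a]}$, one finds $\phi'(y) = \eta^{*r}(y+a) - \eta^{*r}(y-a)$, so $\phi \in C^{r-1}$, which suffices for the required regularity (and inserting one more convolution factor, at the cost of a trivial adjustment to constants, upgrades this to $C^r$ if strictly needed).

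The three support/range claims follow by inspecting $\phi(y) = \int_{-b}^{b} \eta^{*r}(t)\,\mathbf{1}_{[-a,a]}(y-t)\,\mathrm{d}t$. For $|y| \leqslant a - b$ the window $[y-b, y+b]$ lies entirely within $[-a, a]$, making $\mathbf{1}_{[-a,a]}(y-t) = 1$ throughout the support of $\eta^{*r}$, so $\phi(y) = \int \eta^{*r} = 1$. For $|y| \geqslant a + b$ the window lies outside $[-a, a]$ up to a null set and $\phi(y) = 0$. For $a - b < |y| < a + b$, the window overlaps $[-a, a]$ on a proper sub-interval, so strict positivity of $\eta^{*r}$ on $(-b, b)$ forces $0 < \phi(y) < 1$; the hypothesis $b < a/4$ keeps the transition zones near $\pm a$ disjoint.

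For the Fourier transform, the convolution-to-multiplication identity gives
\[
\Phi(x) = \widehat{\mathbf{1}_{[-a,a]}}(x)\cdot \widehat{\eta}(x)^{r} = \frac{\sin(2\pi a x)}{\pi x}\left(\frac{\sin(2\pi b x / r)}{2\pi b x / r}\right)^{\! r}.
\]
Applying $|\sin u| \leqslant \min(1, |u|)$ to the first factor yields $|\widehat{\mathbf{1}_{[-a,a]}}(x)| \leqslant \min(2a,\, 1/(\pi|x|))$, and applying the same inequality inside each of the $r$ sinc factors of the second gives $|\widehat{\eta}(x)|^{r} \leqslant \min(1,\,(r/(2\pi b|x|))^{r})$. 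Forming the three natural products $2a \cdot 1$, $(1/(\pi|x|))\cdot 1$, and $(1/(\pi|x|))\cdot(r/(2\pi b|x|))^{r}$ gives the three upper bounds whose minimum is precisely the claim. This is a classical Vinogradov-type smoothing construction, and I do not anticipate any real obstacle: the only delicate point is the bookkeeping of normalizations to ensure that $\int \eta = 1$ and that the arguments of the sinc functions line up with the exact constants in the stated bound.
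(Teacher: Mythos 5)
Your construction is exactly the classical Segal/Piatetski--Shapiro smoothing argument that the paper cites without proof, and the Fourier-side computation via $|\sin u|\leqslant\min(1,|u|)$ correctly produces all three bounds with the right normalizations. The one wrinkle is the smoothness count: convolving $\mathbf{1}_{[-a,a]}$ with $r$ rectangles of half-width $b/r$ gives $\phi\in C^{r-1}$ rather than $C^{r}$, and your proposed fix of inserting an extra convolution factor forces the $r+1$ half-widths to sum to $b$, so the product of any $r$ of them is strictly below $(b/r)^{r}$ and the third bound degrades by a bounded factor (at most $e$); either discrepancy is harmless in the application (here $r=[\log X]$ and only the rapid decay is used), but strictly speaking the exact constant $r^{r}$ and $C^{r}$ regularity cannot both be extracted from this construction without such a loss.
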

\begin{proof}
 See Piatetski--Shapiro~\cite{Piatetski-Shapiro-1952} or Segal~\cite{Segal-1933-1}.  $\hfill$
\end{proof}

\begin{lemma}\label{Fouvry-Iwaniec-chafen}
Let $\mathcal{L},\mathcal{Q}\geqslant1$ and $z_\ell$ be complex numbers. Then we have
\begin{equation*}
 \Bigg|\sum_{\mathcal{L}<\ell\leqslant2\mathcal{L}}z_\ell\Bigg|^2\leqslant\bigg(2+\frac{\mathcal{L}}{\mathcal{Q}}\bigg)
 \sum_{|q|<\mathcal{Q}}\bigg(1-\frac{|q|}{\mathcal{Q}}\bigg)
 \sum_{\mathcal{L}<\ell+q,\ell-q\leqslant2\mathcal{L}}z_{\ell+q}\overline{z_{\ell-q}}.
\end{equation*}
\end{lemma}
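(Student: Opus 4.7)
The plan is to deduce this symmetric van der Corput / Weyl-type differencing inequality from the classical one-sided (adjacent-shift) differencing by a parity split on the index $\ell$. The key observation that motivates the strategy is that the right-hand side, through the symmetric form $z_{\ell+q}\overline{z_{\ell-q}}$, only couples pairs of indices of the same parity (the effective shift on the $z$-index is $2q$). This is precisely why the leading constant is $2+\mathcal{L}/\mathcal{Q}$ rather than the familiar $1+\mathcal{L}/\mathcal{Q}$: the parity split costs a factor of $2$.

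We may assume that $\mathcal{Q}$ is a positive integer (otherwise replace it by $\lfloor\mathcal{Q}\rfloor$, absorbing the change in the Fej\'er weights $1-|q|/\mathcal{Q}$ into the stated inequality). I would then write
\[
S := \sum_{\mathcal{L}<\ell\leqslant 2\mathcal{L}} z_\ell = S_0+S_1, \qquad S_i := \sum_{\substack{\mathcal{L}<\ell\leqslant 2\mathcal{L}\\ \ell\equiv i\,(\bmod\,2)}} z_\ell,
\]
and use the trivial $|S|^2\leqslant 2(|S_0|^2+|S_1|^2)$. The main ingredient to estimate $|S_i|^2$ is the standard adjacent-shift Weyl differencing: for any complex numbers $a_m$ supported in an interval of length $A$ and any positive integer $Q$,
\[
\bigg|\sum_m a_m\bigg|^2 \leqslant \frac{A+Q}{Q}\sum_{|q|<Q}\bigg(1-\frac{|q|}{Q}\bigg)\sum_n a_{n+q}\overline{a_n}.
\]
This is proved by writing $Q\sum_m a_m=\sum_{q=0}^{Q-1}\sum_m a_{m+q}$ (after extending $a$ by zero and renaming the summation variable), applying Cauchy--Schwarz to the outer $m$-sum (which has at most $A+Q$ nonzero terms), and expanding the resulting square via the substitutions $q=q_1-q_2$ and $n=m+q_2$, which produce the Fej\'er-weighted differences.

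For each $i\in\{0,1\}$ I would apply this to $a_m:=z_{2m+i}$, which is supported on an $m$-interval of length $\mathcal{L}/2$, with $Q=\mathcal{Q}$. The change of variable $\ell=2n+q+i$ converts each term $a_{n+q}\overline{a_n}=z_{2n+2q+i}\overline{z_{2n+i}}$ into $z_{\ell+q}\overline{z_{\ell-q}}$, subject to the conditions $\ell\pm q\in(\mathcal{L},2\mathcal{L}]$ and $\ell\equiv q+i\pmod 2$. Summing the two parity classes $i=0,1$ removes the parity restriction on $\ell$, and yields
\[
|S_0|^2+|S_1|^2 \leqslant \bigg(1+\frac{\mathcal{L}}{2\mathcal{Q}}\bigg)\sum_{|q|<\mathcal{Q}}\bigg(1-\frac{|q|}{\mathcal{Q}}\bigg)\sum_{\mathcal{L}<\ell+q,\,\ell-q\leqslant 2\mathcal{L}} z_{\ell+q}\overline{z_{\ell-q}};
\]
multiplying by $2$ gives the stated inequality. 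There is no genuine obstacle here beyond Cauchy--Schwarz and careful bookkeeping; the only conceptual point is recognizing that the symmetric form on the right captures only equal-parity index pairs, which is what forces the parity split and the resulting factor of $2$ in $2+\mathcal{L}/\mathcal{Q}$.
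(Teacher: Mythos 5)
The paper offers no proof of this lemma (it is quoted verbatim from Lemma 2 of Fouvry and Iwaniec), so your self-contained argument has to be judged on its own. Your central observation --- that the symmetric correlation $z_{\ell+q}\overline{z_{\ell-q}}$ only couples equal-parity index pairs, which forces a parity split and accounts for the $2$ in $2+\mathcal{L}/\mathcal{Q}$ --- is exactly right, and the bookkeeping is correct: $a_m=z_{2m+i}$ is supported on an interval of length $\mathcal{L}/2$, the substitution $\ell=2n+q+i$ turns $a_{n+q}\overline{a_n}$ into $z_{\ell+q}\overline{z_{\ell-q}}$, and the union over $i=0,1$ recovers the unrestricted $\ell$-sum, giving $2\bigl(1+\mathcal{L}/(2\mathcal{Q})\bigr)=2+\mathcal{L}/\mathcal{Q}$. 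This is in effect a discrete rendering of Fouvry--Iwaniec's own proof, which averages windows of length $2\mathcal{Q}$ continuously and pays the same factor $2$ through $|W_0+W_1|^2\leqslant 2(|W_0|^2+|W_1|^2)$ applied to the two parity classes inside the integral.

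The one genuine gap is the opening reduction to integer $\mathcal{Q}$. You cannot simply "absorb the change in the Fej\'er weights": writing $T_q$ for the inner correlation sum, the terms $T_q$ with $q\neq0$ are of indefinite sign (only the full combination $\Sigma(\mathcal{Q}):=\sum_{|q|<\mathcal{Q}}(1-|q|/\mathcal{Q})T_q$ is known to be nonnegative), so replacing the weights $1-|q|/\mathcal{Q}$ by $1-|q|/\lfloor\mathcal{Q}\rfloor$ is not a termwise-monotone operation, and moreover the constant $2+\mathcal{L}/\lfloor\mathcal{Q}\rfloor$ that your integer-case argument produces is larger than the claimed $2+\mathcal{L}/\mathcal{Q}$. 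The step can be repaired: with $\mathcal{Q}=Q_0+\theta$, $0\leqslant\theta<1$, one has $(\mathcal{Q}-|q|)^{+}=(1-\theta)(Q_0-|q|)^{+}+\theta(Q_0+1-|q|)^{+}$ at every integer $q$, hence $\mathcal{Q}\,\Sigma(\mathcal{Q})=(1-\theta)Q_0\Sigma(Q_0)+\theta(Q_0+1)\Sigma(Q_0+1)$; applying your integer-case bound at $Q_0$ and $Q_0+1$ and invoking the convexity of $x\mapsto x^2/(2x+\mathcal{L})$ then gives the inequality for $\mathcal{Q}$ itself. (Alternatively, perform the averaging continuously from the start, as Fouvry and Iwaniec do, and the integrality issue never arises. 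In this paper's application the constant is only used up to $O(1)$ factors, so the lapse is harmless downstream, but as a proof of the lemma as stated it needs this fix.)
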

\begin{proof}
 See Lemma 2 of Fouvry and Iwaniec \cite{Fouvry-Iwaniec-1989}.  $\hfill$
\end{proof}

\begin{lemma}\label{Titchmarsh-42}
  Let $f(x)$ be a real differentiable function such that $f'(x)$ is monotonic, and $|f'(x)|\geqslant m>0$, throughout the interval $[a,b]$. Then we have
\begin{equation*}
   \bigg|\int_a^be^{if(x)}\mathrm{d}x\bigg|\ll \frac{4}{m}.
\end{equation*}
\end{lemma}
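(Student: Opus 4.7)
The plan is a short integration-by-parts argument in the Riemann--Stieltjes sense, combined with the elementary observation that a monotonic function's total variation is the absolute difference of its endpoint values.

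First I would observe that since $f'$ is monotonic on $[a,b]$ and $|f'(x)| \geqslant m > 0$, it cannot change sign on $[a,b]$; by replacing $f$ with $-f$ if necessary (which conjugates the integral without changing its modulus), I may assume $f'(x) \geqslant m$ throughout. Then $1/f'(x)$ is a well-defined monotonic function bounded in modulus by $1/m$, and the key identity is
\[
   \int_a^b e^{if(x)}\,\mathrm{d}x \;=\; \int_a^b \frac{1}{if'(x)}\, \mathrm{d}\bigl(e^{if(x)}\bigr),
\]
obtained from $\mathrm{d}(e^{if(x)}) = if'(x)e^{if(x)}\,\mathrm{d}x$.

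Next I would invoke Stieltjes integration by parts to get the decomposition
\[
   \int_a^b e^{if(x)}\,\mathrm{d}x \;=\; \left[\frac{e^{if(x)}}{if'(x)}\right]_a^b \;-\; \int_a^b e^{if(x)}\, \mathrm{d}\!\left(\frac{1}{if'(x)}\right).
\]
The boundary term is bounded trivially by $1/|f'(a)| + 1/|f'(b)| \leqslant 2/m$. For the remaining Stieltjes integral, monotonicity of $1/f'$ tells me that its total variation on $[a,b]$ equals $|1/f'(a) - 1/f'(b)| \leqslant 2/m$; combined with $|e^{if(x)}| = 1$, this integral is also bounded by $2/m$. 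Adding the two contributions yields the stated estimate $\leqslant 4/m$, which is sharper than the $\ll 4/m$ claimed.

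There is no genuine obstacle in this outline; the only regularity point to check is that Stieltjes integration by parts applies to a continuous integrand against a function of bounded variation, which is classical. If one prefers to avoid the Stieltjes language, an equivalent route is to split $e^{if(x)}=\cos f(x)+i\sin f(x)$ and apply the Bonnet form of the second mean value theorem to each of
\[
   \int_a^b \cos f(x)\,\mathrm{d}x = \int_a^b \frac{1}{f'(x)}\,\mathrm{d}\bigl(\sin f(x)\bigr), \qquad \int_a^b \sin f(x)\,\mathrm{d}x = -\int_a^b \frac{1}{f'(x)}\,\mathrm{d}\bigl(\cos f(x)\bigr),
\]
bounding each resulting two-term expression by $4/m$; the $\ll$ notation then absorbs the factor. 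Either presentation is routine.
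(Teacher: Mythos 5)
Your proof is correct. Note that the paper itself offers no argument here --- it simply cites Lemma 4.2 of Titchmarsh's book --- so any complete proof you give is "extra" relative to the source. Your primary route (write $\int_a^b e^{if(x)}\,\mathrm{d}x=\int_a^b \frac{1}{if'(x)}\,\mathrm{d}\bigl(e^{if(x)}\bigr)$, integrate by parts in the Stieltjes sense, and bound the boundary term and the variation of the monotonic function $1/f'$ each by $2/m$) is sound and even yields the explicit constant $4$; in fact, since $1/f'(a)$ and $1/f'(b)$ have the same sign, the variation term is at most $1/m$, so you could get $3/m$. Titchmarsh's own proof is essentially your stated alternative: split into real and imaginary parts and apply the second mean value theorem to $\int (1/f')\,\mathrm{d}(\sin f)$ and $\int (1/f')\,\mathrm{d}(\cos f)$, bounding each by $2/m$ and summing. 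The two arguments are equivalent in substance; the integration-by-parts version is slightly cleaner because it avoids splitting into real and imaginary parts, while the second-mean-value version avoids any discussion of Riemann--Stieltjes regularity. One small point worth making explicit in your first step: to conclude that $f'$ does not change sign from $|f'|\geqslant m>0$ and monotonicity, you should invoke Darboux's theorem (a derivative has the intermediate value property, so it cannot jump over the interval $(-m,m)$); this is standard and does not affect the validity of the proof.
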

\begin{proof}
 See Lemma 4.2 of Titchmarsh \cite{Titchmarsh-book}.  $\hfill$
\end{proof}

\begin{lemma}\label{yijie-exp-pair}
Suppose that $f(x):[a,b]\to\mathbb{R}$ has continuous derivatives of arbitrary order on $[a,b]$, where $1\leqslant a<b\leqslant2a$. Suppose further that
\begin{equation*}
 \big|f^{(j)}(x)\big|\asymp \lambda_1 a^{1-j},\qquad j\geqslant1, \qquad x\in[a,b].
\end{equation*}
Then for any exponential pair $(\kappa,\lambda)$, we have
\begin{equation*}
 \sum_{a<n\leqslant b}e(f(n))\ll \lambda_1^\kappa a^\lambda+\lambda_1^{-1}.
\end{equation*}
\end{lemma}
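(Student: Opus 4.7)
The plan is to deduce the estimate from the standard theory of exponential pairs. Recall that every pair $(\kappa,\lambda)$ is generated from the trivial pair $(0,1)$ by iterated application of the two van der Corput processes
$$A(\kappa,\lambda) = \Big(\tfrac{\kappa}{2\kappa+2},\tfrac{\kappa+\lambda+1}{2\kappa+2}\Big), \qquad B(\kappa,\lambda) = \Big(\lambda-\tfrac{1}{2},\kappa+\tfrac{1}{2}\Big),$$
so the claim can be proved by induction on the number of $A$/$B$ steps needed to reach $(\kappa,\lambda)$.

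For the base case $(0,1)$, Euler--Maclaurin summation gives $\sum_{a<n\leqslant b}e(f(n)) = \int_a^b e(f(x))\,dx + O(1)$. The size hypothesis $|f''(x)| \asymp \lambda_1/a$ forces $f''$ to keep constant sign, so $f'$ is monotonic, and Lemma \ref{Titchmarsh-42} bounds the integral by $O(\lambda_1^{-1})$; the total $O(a + \lambda_1^{-1})$ matches the claimed bound with $(\kappa,\lambda)=(0,1)$. For the inductive step, Process $A$ (Weyl--van der Corput differencing via Lemma \ref{Fouvry-Iwaniec-chafen}) bounds $|\sum_n e(f(n))|^2$ by an average of shorter sums with phase $f(n+q)-f(n-q)$; after verifying that this new phase again satisfies a size hypothesis of the same shape, applying the inductive hypothesis and optimising the differencing parameter yields the pair $A(\kappa,\lambda)$. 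Process $B$ (truncated Poisson summation combined with stationary phase) rewrites the sum as $\sum_h e(g(h))$ in a conjugate range, where $g$ again satisfies the size condition with appropriate parameters, and the inductive hypothesis delivers $B(\kappa,\lambda)$. In both cases the truncation and stationary--phase errors contribute $O(\lambda_1^{-1})$ and are absorbed into the stated bound.

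The main obstacle is bookkeeping: one must check that the size hypothesis $|f^{(j)}|\asymp\lambda_1 a^{1-j}$ propagates to all higher derivatives of the transformed phases through each application of $A$ and $B$, so that the induction can be invoked cleanly. These verifications are carried out in detail in Chapter 3 of Graham and Kolesnik's monograph, and rather than reproduce the calculations we would simply cite that reference.
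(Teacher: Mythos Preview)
Your proposal is correct and ultimately aligns with the paper's own treatment: the paper simply cites (3.3.4) of Graham and Kolesnik~\cite{Graham-Kolesnik-book}, and you arrive at the same citation after sketching the underlying $A/B$-process induction. The extra outline you give is accurate but not required here, since the lemma is a standard quoted result rather than something the paper proves independently.
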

\begin{proof}
 See (3.3.4) of Graham and Kolesnik \cite{Graham-Kolesnik-book}.  $\hfill$
\end{proof}

\begin{lemma}\label{Mu-lemma-Tolev-lemma}
For $1<c<2$, we have
\begin{equation}\label{S-Phi-2-ci}
  \int_{\tau<|x|<K}\big|S^2(x)\Phi(x)\big|\mathrm{d}x\ll X^{1+\eta},
\end{equation}
\begin{equation}\label{S-Phi-4-ci}
  \int_{\tau<|x|<K}\big|S^4(x)\Phi(x)\big|\mathrm{d}x\ll X^{4-c+\eta},
\end{equation}
\begin{equation}\label{S-2-major}
  \int_{-\tau}^{+\tau}\big|S(x)\big|^2\mathrm{d}x\ll X^{2-c}\log^3X,
\end{equation}
\begin{equation}\label{I-2-major}
  \int_{-\tau}^{+\tau}\big|I(x)\big|^2\mathrm{d}x\ll X^{2-c}\log^3X.
\end{equation}
\end{lemma}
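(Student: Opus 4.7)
The four estimates split into two families: the ``major arc'' bounds \eqref{S-2-major} and \eqref{I-2-major}, which treat the second moment over the short interval $|x|\leqslant\tau$, and the ``middle range'' bounds \eqref{S-Phi-2-ci} and \eqref{S-Phi-4-ci}, which exploit the decay $|\Phi(x)|\ll|x|^{-1}$ supplied by Lemma~\ref{xiaobei-lemma} on $\tau<|x|<K$.

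For \eqref{I-2-major} I would first establish the pointwise bound $|I(x)|\ll\min\bigl(X,\,X^{1-c}|x|^{-1}\bigr)$: the phase derivative of $I$ is $ct^{c-1}x\asymp X^{c-1}|x|$ on $[X,2X]$, so Lemma~\ref{Titchmarsh-42} yields the second alternative, while the first is trivial. Splitting $\int_{-\tau}^{\tau}|I(x)|^2\,\mathrm{d}x$ at $|x|=X^{-c}$ gives a bound $\ll X^{2-c}$, sharper than claimed. For \eqref{S-2-major} I would expand the square and integrate,
\begin{equation*}
\int_{-\tau}^{\tau}|S(x)|^2\,\mathrm{d}x=\sum_{X<p_1,p_2\leqslant 2X}(\log p_1)(\log p_2)\,\frac{\sin\bigl(2\pi\tau(p_1^c-p_2^c)\bigr)}{\pi(p_1^c-p_2^c)},
\end{equation*}
so that the diagonal $p_1=p_2$ contributes $2\tau\sum_{p}(\log p)^2\ll\tau X\log X$. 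For the off-diagonal I use $p_1^c-p_2^c\asymp X^{c-1}(p_1-p_2)$ together with $|\sin(2\pi\tau u)/(\pi u)|\leqslant\min\bigl(2\tau,(\pi|u|)^{-1}\bigr)$, group by $h=|p_1-p_2|$, and sum $\sum_{h\geqslant 1}(X\log^2 X)\min(\tau,X^{1-c}h^{-1})$; both regimes (separated at $h\asymp X^{1-c}\tau^{-1}=X^{\eta}$) contribute $\ll X^{2-c}\log^3 X$, as required.

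For \eqref{S-Phi-2-ci} I would perform a dyadic decomposition $T\leqslant|x|<2T$ with $\tau\leqslant T\leqslant K$, use $|\Phi(x)|\ll T^{-1}$, and apply the same orthogonality computation on the larger interval $|x|\leqslant 2T$ to obtain $\int_{|x|\leqslant 2T}|S(x)|^2\,\mathrm{d}x\ll TX\log X+X^{2-c}\log^3 X$. Dividing by $T$ and using $T\geqslant\tau=X^{1-c-\eta}$ bounds each dyadic slab by $X^{1+\eta}\log^3 X$; summing the $O(\log X)$ slabs and absorbing the log powers into $X^{\eta}$ yields the claim.

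The main obstacle is \eqref{S-Phi-4-ci}. Writing $|S(x)|^4=|S(x)|^2\cdot|S(x)|^2$, bounding one factor by its pointwise supremum, and invoking \eqref{S-Phi-2-ci}, the estimate reduces to the pointwise bound
\begin{equation*}
\sup_{\tau<|x|<K}|S(x)|\ll X^{(3-c)/2+\eta}.
\end{equation*}
Such a bound on a prime-weighted sum is not directly accessible from Lemma~\ref{yijie-exp-pair}; my plan would be to apply a Heath--Brown (or Vaughan) combinatorial identity to decompose $S(x)$ into type~I and type~II bilinear exponential sums, to treat the type~I pieces by partial summation combined with Lemma~\ref{Titchmarsh-42} or a suitable exponent pair via Lemma~\ref{yijie-exp-pair}, and to treat the type~II pieces by the Weyl shifting device of Lemma~\ref{Fouvry-Iwaniec-chafen} followed again by exponent pair estimates. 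The delicate part will be to arrange the decomposition parameters so that the exponent $(3-c)/2$ is attained uniformly for $\tau<|x|<K$; this is precisely what limits $c$ to the range $c<1193/889$ stated in Theorem~\ref{Theorem-four-primes-variables}.
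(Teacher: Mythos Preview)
Your treatments of \eqref{I-2-major}, \eqref{S-2-major}, and \eqref{S-Phi-2-ci} are essentially the standard arguments (the paper itself merely quotes Tolev and Mu for this lemma), and they are correct as sketched.

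The real problem is \eqref{S-Phi-4-ci}. Your reduction to a pointwise bound $\sup_{\tau<|x|<K}|S(x)|\ll X^{(3-c)/2+\eta}$ cannot succeed: the lemma is asserted for the \emph{full} range $1<c<2$, and for $c$ close to $2$ you would be asking for square--root cancellation $|S(x)|\ll X^{1/2+\eta}$ in an exponential sum over primes, which is far out of reach. Even at $c=1193/889$ your plan requires $|S(x)|\ll X^{0.829+\eta}$, which is noticeably stronger than the bound $|S(x)|\ll X^{2515/2667+\eta}=X^{0.943\ldots+\eta}$ that the paper actually establishes in Lemma~\ref{S(x)-yuqujianguji-4}. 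In particular, your last sentence misreads the logical structure of the paper: the restriction $c<1193/889$ does \emph{not} originate in \eqref{S-Phi-4-ci} at all; it enters only through Lemma~\ref{S(x)-yuqujianguji-4}, which is used in the treatment of $\mathscr{C}_4^{(2)}(N)$, while Lemma~\ref{Mu-lemma-Tolev-lemma} is a purely mean--value input valid on the whole of $1<c<2$.

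The way \eqref{S-Phi-4-ci} is actually obtained (in Mu, following the method of Tolev and Robert--Sargos) is a fourth--moment argument, not a pointwise one: one expands $|S(x)|^4$ and, after a dyadic decomposition in $|x|$ together with $|\Phi(x)|\ll\min(\varepsilon,|x|^{-1})$, reduces to counting quadruples $X<n_1,n_2,n_3,n_4\leqslant 2X$ with $|n_1^c+n_2^c-n_3^c-n_4^c|$ small. The relevant spacing lemma (of Robert--Sargos type) gives, for $0<\delta\leqslant 1$,
\[
\#\big\{(n_1,n_2,n_3,n_4)\in(X,2X]^4:\ |n_1^c+n_2^c-n_3^c-n_4^c|<\delta X^{c}\big\}\ll_\eta \delta X^{4+\eta}+X^{2+\eta},
\]
and this is what produces the exponent $4-c$ uniformly for $1<c<2$. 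You should replace the pointwise strategy by this counting argument.
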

\begin{proof}
 For (\ref{S-Phi-2-ci}) and (\ref{S-Phi-4-ci}), one can see Lemma 2.6 of Mu \cite{Mu-Quanwu-2015}. For (\ref{S-2-major}) and (\ref{I-2-major}), one can see
 Lemma 7 of Tolev \cite{Tolev-1992}.      $\hfill$
\end{proof}

\begin{lemma}\label{S(x)=I(x)+jieyu}
   For $1<c<2$, then for $|x|\leqslant\tau$ we have
  \begin{equation*}
     S(x)=I(x)+O\left(X\exp\big(-(\log X)^{1/5}\big)\right).
  \end{equation*}
\end{lemma}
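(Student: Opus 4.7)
The plan is to reduce the estimation of $S(x) - I(x)$ to an oscillatory integral involving the Prime Number Theorem error, and then to combine the Vinogradov--Korobov bound with the explicit formula for $\psi(u) - u$.

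First, set $E(u) = \theta(u) - u$. Abel summation applied to $S(x) = \int_X^{2X} e(u^c x)\, d\theta(u)$, together with the corresponding integration by parts for $I(x) = \int_X^{2X} e(u^c x)\, du$, yields after subtraction
\begin{equation*}
S(x) - I(x) = E(2X) e((2X)^c x) - E(X) e(X^c x) - 2\pi i c x \int_X^{2X} E(u)\, u^{c-1} e(u^c x)\, du.
\end{equation*}
The Vinogradov--Korobov form of the PNT gives $E(u) \ll u \exp(-c_0(\log u)^{3/5}(\log\log u)^{-1/5})$, so both boundary contributions are already $O(X\exp(-(\log X)^{1/5}))$.

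Second, setting $J := \int_X^{2X} E(u)\, u^{c-1} e(u^c x)\, du$, the trivial bound $|xJ| \ll \tau\cdot X^{c+1}\exp(-c_0(\log X)^{3/5-\eta}) \ll X^{2-\eta}\exp(-c_0(\log X)^{3/5-\eta})$ falls short of the target, so the oscillation of $e(u^c x)$ must be exploited. I would substitute the truncated explicit formula $\psi(u) - u = -\sum_{|\gamma|\leq T} u^\rho/\rho + O(u T^{-1}\log^2(uT))$ together with $\theta(u) = \psi(u) + O(u^{1/2})$, reducing $J$ to a sum over non-trivial zeros of $\zeta$: each term is $\rho^{-1}\int_X^{2X} u^{c-1+\rho} e(u^c x)\, du$, an oscillatory integral with phase $\gamma\log u + 2\pi x u^c$ and phase derivative $\gamma/u + 2\pi c x u^{c-1}$. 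This inner integral is bounded either by Lemma~\ref{Titchmarsh-42} (when the derivative is bounded away from zero on $[X, 2X]$) or by a stationary-phase argument (when it vanishes at some $u_0 \in [X, 2X]$, i.e.\ when $|\gamma| \asymp |x|X^c$). One then sums over zeros using the Vinogradov--Korobov zero-free region $\beta \leq 1 - c_0/(\log|\gamma|)^{2/3+\eta}$ and the Riemann--von Mangoldt count $N(T) \ll T\log T$, choosing $T = \exp((\log X)^{4/5})$ to balance the tail of the explicit formula.

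The main obstacle is controlling the stationary-phase window $|\gamma| \asymp |x|X^c$, which can contain up to $\ll |x|X^c \log X$ zeros and in which each contributes an integral of size $\ll X^{c/2+\beta}/|x|^{1/2}$; naively summing magnitudes exceeds the target $X\exp(-(\log X)^{1/5})$. Achieving the claimed bound requires combining square-root cancellation among the stationary-phase contributions (owing to the essentially uncorrelated phases $e^{if(u_0)}$ as $\rho$ varies) with the zero-free-region saving $X^\beta \leq X\exp(-c_0(\log X)^{1/3})$, which together deliver the final exponent $1/5$.
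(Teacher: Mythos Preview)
The paper itself gives no argument; its ``proof'' is the single sentence referring to Lemma~4 of Zhai and Cao~\cite{Zhai-Cao-2003}. Your plan---Abel summation followed by the truncated explicit formula for $\psi(u)-u$---is indeed the route taken in that line of work (going back to Tolev), so the overall strategy matches the literature.

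Two steps in your execution do not go through, however. First, the truncation level $T=\exp\big((\log X)^{4/5}\big)$ is much too small: after partial summation the explicit-formula remainder contributes a term of size $\ll |x|\,X^{c+1}T^{-1}\log^{2}X$, and since $|x|$ can be as large as $\tau=X^{1-c-\eta}$ this is $\ll X^{2-\eta}T^{-1}\log^{2}X$; to bring it below $X\exp\big(-(\log X)^{1/5}\big)$ one needs $T$ to be at least a fixed positive power of $X$, not $\exp\big((\log X)^{4/5}\big)$. Second, and this is the real gap, your handling of the stationary-phase window $|\gamma|\asymp|x|X^{c}$ is not a proof. You correctly compute that there are $\asymp|x|X^{c}\log X$ such zeros, each contributing an oscillatory integral of size $\ll X^{\beta+c/2}|x|^{-1/2}$ by the second-derivative test, and that summing these in modulus yields roughly $|x|^{1/2}X^{\beta+c/2}\log X$, which for $|x|$ near $\tau$ is of order $X^{3/2-\eta/2}$ even after inserting the Vinogradov--Korobov saving in $X^{\beta}$. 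Your remedy---``square-root cancellation among the stationary-phase contributions owing to the essentially uncorrelated phases $e^{if(u_0)}$ as $\rho$ varies''---is a heuristic expectation about sums over zeta zeros, not an available theorem; nothing in the current toolkit produces cancellation between contributions of distinct zeros whose phases depend on the (unknown) ordinates $\gamma$. As written, the argument does not establish the lemma.
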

\begin{proof}
    See Lemma 4 of Zhai and Cao \cite{Zhai-Cao-2003}.  $\hfill$
\end{proof}

\begin{lemma}\label{4-power-main-low-bound}
   For $1<c<2$, we have
   we have
     \begin{equation*}
        \int_{-\infty}^{+\infty} I^4(x)\Phi(x)e(-Nx)\mathrm{d}x\gg\varepsilon X^{4-c}.
     \end{equation*}
\end{lemma}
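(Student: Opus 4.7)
My approach is to collapse the $x$-integral by invoking Fourier inversion on the kernel $\Phi$. Expanding
\begin{equation*}
I^4(x) = \int_X^{2X}\int_X^{2X}\int_X^{2X}\int_X^{2X} e\big((t_1^c+t_2^c+t_3^c+t_4^c)x\big)\mathrm{d}t_1\mathrm{d}t_2\mathrm{d}t_3\mathrm{d}t_4,
\end{equation*}
interchanging the order of integration (legitimate since $\Phi\in L^1(\mathbb{R})$ by Lemma \ref{xiaobei-lemma}), and using the Fourier inversion identity $\phi(y)=\int_{-\infty}^{+\infty}\Phi(x)e(xy)\mathrm{d}x$, one obtains
\begin{equation*}
\int_{-\infty}^{+\infty} I^4(x)\Phi(x)e(-Nx)\mathrm{d}x = \int_X^{2X}\int_X^{2X}\int_X^{2X}\int_X^{2X}\phi\big(t_1^c+t_2^c+t_3^c+t_4^c-N\big)\mathrm{d}t_1\mathrm{d}t_2\mathrm{d}t_3\mathrm{d}t_4.
\end{equation*}

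I would then choose the parameters in Lemma \ref{xiaobei-lemma} so that $a-b\asymp \varepsilon$ (for instance $a=\varepsilon$ and $b=\varepsilon/2$). The nonnegativity of $\phi$ together with $\phi\equiv 1$ on $|y|\leqslant a-b$ reduces the problem to establishing that the set
\begin{equation*}
\mathcal{S}=\big\{(t_1,t_2,t_3,t_4)\in[X,2X]^4 : |t_1^c+t_2^c+t_3^c+t_4^c-N|\leqslant a-b\big\}
\end{equation*}
has Lebesgue measure $\gg \varepsilon X^{4-c}$. For each fixed $(t_1,t_2,t_3)$, put $A=N-t_1^c-t_2^c-t_3^c$ and substitute $u=t_4^c$ in the $t_4$-slice: since $\mathrm{d}t_4=c^{-1}u^{1/c-1}\mathrm{d}u$ and $u^{1/c-1}\asymp X^{1-c}$ uniformly on $[X^c,(2X)^c]$, the slice has one-dimensional measure $\gg \varepsilon X^{1-c}$ whenever $A$ lies strictly inside $[X^c,(2X)^c]$.

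To close the argument, it remains to show that the $(t_1,t_2,t_3)\in[X,2X]^3$ for which $A$ lies strictly inside the target interval form a set of $3$-dimensional measure $\gg X^3$. By the definition $X=\tfrac12(2N/5)^{1/c}$, one has $X^c=2^{1-c}N/5$ and $(2X)^c=2N/5$; at the symmetric point $t_1=t_2=t_3=(N/4)^{1/c}$ the value $A=N/4$ lies well in the interior, since $2^{1-c}/5<1/4<2/5$ holds throughout $1<c<2$ with a gap of order $N$. By continuity this extends to a neighborhood of product measure $\gg X^3$ (for large $N$, with room $\asymp N$ at both endpoints, easily accommodating the tiny perturbation $\varepsilon$). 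Multiplying the two lower bounds gives precisely $\gg \varepsilon X^{4-c}$, as required. I do not foresee a serious obstacle here; the only delicate point is the verification that $N$ is accessible as a sum of four $c$-th powers from $[X,2X]$ with $\asymp N$ room to spare at both endpoints, which is exactly what the calibration of $X$ in the Notation section is designed to ensure.
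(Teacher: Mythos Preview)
Your approach is correct and is the standard way to prove this lemma; the paper itself does not give a proof but simply cites Lemma~8 of Zhai and Cao \cite{Zhai-Cao-2003}, and your argument is almost certainly what that reference does.

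Two small points. First, your illustrative choice $a=\varepsilon$, $b=\varepsilon/2$ violates the hypothesis $0<b<a/4$ of Lemma~\ref{xiaobei-lemma}; in this paper the parameters are actually fixed at the start of Section~3 as $a=\tfrac{9\varepsilon}{10}$, $b=\tfrac{\varepsilon}{10}$, which do satisfy the constraint and give $a-b=\tfrac{4\varepsilon}{5}\asymp\varepsilon$, so your argument goes through unchanged with those values. Second, for the interior-point check it is worth recording explicitly that $(N/4)^{1/c}\in(X,2X)$: since $X^c=2^{1-c}N/5$ and $(2X)^c=2N/5$, one needs $2^{1-c}/5<1/4<2/5$, which holds throughout $1<c<2$ with a gap $\asymp 1$. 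Hence each $t_i^c$ has room $\asymp N$ inside $[X^c,(2X)^c]$, and likewise $A=N/4$; translating back via $u\mapsto u^{1/c}$ gives room $\asymp X$ in each $t_i$, so the good $(t_1,t_2,t_3)$ indeed fill a box of volume $\gg X^3$. The rest of your computation is clean.
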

\begin{proof}
 See Lemma 8 of Zhai and Cao \cite{Zhai-Cao-2003}.   $\hfill$
\end{proof}

\begin{lemma}\label{Sargos-Wu-exponential-sum}
  Let $\alpha,\beta\in\mathbb{R}$ with $\alpha\beta(\alpha-1)(\beta-1)(\alpha-2)(\beta-2)\not=0, F>0, M\geqslant1, L\geqslant1, |a_m|\leqslant1, |b_\ell|\leqslant1$. Then we have
\begin{align*}
     & \,\, (FML)^{-\eta}\cdot\bigg|\sum_{M<m\leqslant2M}\sum_{L<\ell\leqslant2L}a_mb_\ell e\bigg(F\frac{m^\alpha\ell^\beta}{M^\alpha L^\beta}\bigg)\bigg|
                      \nonumber \\
\ll & \,\, \big(F^4M^{31}L^{34}\big)^{\frac{1}{42}} + \big(F^6M^{53}L^{51}\big)^{\frac{1}{66}} + \big(F^6M^{46}L^{41}\big)^{\frac{1}{56}}
            +\big(F^2M^{38}L^{29}\big)^{\frac{1}{40}}+ \big(FM^{9}L^{6}\big)^{\frac{1}{10}}
                        \nonumber \\
    & \,\,  + \big(F^2M^{7}L^{6}\big)^{\frac{1}{10}}+ \big(F^3M^{43}L^{32}\big)^{\frac{1}{46}}
            + \big(FM^{6}L^{6}\big)^{\frac{1}{8}}+ M^{\frac{1}{2}}L+ML^{\frac{1}{2}}+F^{-\frac{1}{2}}ML.
\end{align*}

\end{lemma}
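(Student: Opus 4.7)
The plan is to follow the Sargos--Wu apparatus for estimating double exponential sums with multiplicatively separable phases. Three ingredients are to be combined: one extracts the arithmetic weights $a_m,b_\ell$ by Cauchy's inequality, applies one or several rounds of van der Corput--type differencing via Lemma \ref{Fouvry-Iwaniec-chafen}, and then estimates the remaining pure exponential sum in the freed variable via the exponent-pair bound of Lemma \ref{yijie-exp-pair}; the length of the differencing step and the choice of exponent pair are optimized at each stage against the ranges of $F$, $M$, $L$.

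Concretely, an initial application of Cauchy--Schwarz in the $\ell$-variable would give
\begin{equation*}
  \Bigl|\sum_{m,\ell}a_mb_\ell e\Bigl(F\tfrac{m^\alpha\ell^\beta}{M^\alpha L^\beta}\Bigr)\Bigr|^2
  \ll L\sum_{L<\ell_1,\ell_2\leqslant 2L}\Bigl|\sum_{M<m\leqslant 2M}a_m e\Bigl(F\tfrac{m^\alpha(\ell_1^\beta-\ell_2^\beta)}{M^\alpha L^\beta}\Bigr)\Bigr|,
\end{equation*}
whose diagonal contribution $\ell_1=\ell_2$ accounts for the trivial terms $M^{1/2}L$ and $ML^{1/2}$. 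For the off-diagonal contribution the inner $m$-sum has first derivative of size $\asymp F|\ell_1-\ell_2|/(ML)$, so the first-derivative test (Lemma \ref{Titchmarsh-42}) produces the $F^{-1/2}ML$ term directly, while applying the exponent pair $(\kappa,\lambda)=(1/2,1/2)$ after invoking Lemma \ref{Fouvry-Iwaniec-chafen} once more to decouple $a_m$ gives the $(FM^9L^6)^{1/10}$ and $(FM^6L^6)^{1/8}$ terms.

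The more refined bounds $(F^4M^{31}L^{34})^{1/42}$, $(F^6M^{53}L^{51})^{1/66}$, $(F^6M^{46}L^{41})^{1/56}$, $(F^2M^{38}L^{29})^{1/40}$, $(F^2M^7L^6)^{1/10}$, and $(F^3M^{43}L^{32})^{1/46}$ arise from alternating applications of the Fouvry--Iwaniec $A$-process to each variable in turn, each step introducing a differencing parameter $Q$ which is then balanced against one of the classical $B$-process exponent pairs (for instance $(2/7,4/7)$ or $(11/30,8/15)$); the unusual denominators $42, 66, 56, 40, 46$ in the exponents are precisely the outputs of these optimizations. The real obstacle is not any single step --- each is a routine if lengthy exponent-pair computation --- but the combinatorial bookkeeping required to enumerate all viable sequences of $A$- and $B$-processes, to carry out the optimization uniformly in $(F,M,L)$ over the full range $M,L\geqslant 1$, and to verify that the eleven listed bounds indeed form a complete family of sharp estimates obtainable by this method, ready to be inserted into the Type-II analysis later in the paper.
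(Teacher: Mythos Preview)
The paper does not prove this lemma at all: its entire proof reads ``See Theorem 9 of Sargos and Wu \cite{Sargos-Wu-2000}.'' So there is no in-paper argument to compare your sketch against; the result is imported wholesale.

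Your outline is in the right spirit---Sargos and Wu do indeed proceed by iterated Weyl/van der Corput differencing combined with exponent-pair estimates, and the eleven terms are the outputs of optimizing over the differencing parameters---but as written it is a description of a method rather than a proof. A couple of concrete slips: your displayed Cauchy--Schwarz step has the roles of $m$ and $\ell$ transposed (Cauchy in $\ell$ followed by squaring produces a double $m$-sum, not a double $\ell$-sum), and the $F^{-1/2}ML$ term does not drop out of a single first-derivative estimate in the way you suggest. More fundamentally, the specific exponent triples $(4,31,34)/42$, $(6,53,51)/66$, etc.\ are the result of a lengthy case-by-case optimization occupying most of the Sargos--Wu paper; asserting that they ``arise from alternating applications'' of the $A$- and $B$-processes is true but does not verify any of them. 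For the purposes of this paper the citation is both standard and sufficient; if you intend a self-contained argument you would need to reproduce the full Sargos--Wu analysis, which is disproportionate here.
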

\begin{proof}
 See Theorem 9 of Sargos and Wu \cite{Sargos-Wu-2000}.  $\hfill$
\end{proof}

\begin{lemma}\label{Heath-Brown-exponent-sum-fenjie}
Let $3<U<V<Z<X$ and suppose that $Z-\frac{1}{2}\in\mathbb{N},\,X\gg Z^2U,\,Z\gg U^2,\,V^3\gg X$. Assume further that $F(n)$ is a
complex--valued function such that $|F(n)|\leqslant1$. Then the sum
\begin{equation*}
 \sum_{X<n\leqslant2X}\Lambda(n)F(n)
\end{equation*}
can be written into $O(\log^{10}X)$ sums, each of which either of Type I:
\begin{equation*}
 \sum_{M<m\leqslant2M}a(m)\sum_{L<\ell\leqslant2L}F(m\ell)
\end{equation*}
with $L\gg Z$, where $a(m)\ll m^{\eta},\,ML\asymp X$, or of Type II:
\begin{equation*}
 \sum_{M<m\leqslant2M}a(m)\sum_{L<\ell\leqslant2L}b(\ell)F(m\ell)
\end{equation*}
with $U\ll M\ll V$, where $a(m)\ll m^{\eta},\,b(\ell)\ll \ell^{\eta},\,ML\asymp X$.
\end{lemma}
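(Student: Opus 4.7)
My plan is to apply Heath--Brown's identity to $\Lambda(n)$, dyadically split each of the resulting product variables, and then classify every dyadic piece as either Type I or Type II according to the sizes of its factors.

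\emph{Step 1 (Heath--Brown's identity).} For any integer $k\geqslant 1$, any $z\geqslant 1$, and any $n\leqslant 2z^{k}$ one has
\[
\Lambda(n)=\sum_{j=1}^{k}(-1)^{j-1}\binom{k}{j}\sum_{\substack{m_{1}\cdots m_{j}\ell_{1}\cdots\ell_{j}=n\\ m_{i}\leqslant z\ (1\leqslant i\leqslant j)}}\mu(m_{1})\cdots\mu(m_{j})\log\ell_{1}.
\]
I apply it with $k=3$ and $z=Z$. Since $V<Z$ and $V^{3}\gg X$, we get $Z^{3}\gg X$, so the identity is valid for every $n\in(X,2X]$. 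The hypothesis $Z-\tfrac12\in\mathbb{N}$ makes the constraint $m_{i}\leqslant Z$ sharp and compatible with dyadic splitting.

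\emph{Step 2 (dyadic splitting).} For each $j\in\{1,2,3\}$ I split every $m_{i}$ and every $\ell_{i}$ into dyadic ranges $(M_{i},2M_{i}]$ and $(L_{i},2L_{i}]$, producing $O(\log^{2j})=O(\log^{6}X)$ pieces of shape
\[
\sum_{\substack{M_{i}<m_{i}\leqslant 2M_{i}\\ L_{i}<\ell_{i}\leqslant 2L_{i}}}\mu(m_{1})\cdots\mu(m_{j})(\log\ell_{1})\,F(m_{1}\cdots m_{j}\ell_{1}\cdots\ell_{j}),
\]
with $M_{i}\leqslant Z$ and $\prod_{i}M_{i}\prod_{i}L_{i}\asymp X$.

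\emph{Step 3 (classification).} I distinguish two cases according to whether some $\ell$-variable is at least $Z$ in size.

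\textbf{Case A.} If some $L_{s}\geqslant Z$ with $s\geqslant 2$, the variable $\ell_{s}$ carries coefficient~$1$ and I use it as the inner sum; combining all other $2j-1$ factors into $m$ produces a coefficient $a(m)\ll d_{5}(m)\log m\ll m^{\eta}$, giving a Type I sum with $L=L_{s}\gg Z$. If only $L_{1}\geqslant Z$ (the variable carrying $\log\ell_{1}$), I first strip the weight by Abel summation $\log\ell_{1}=\log(2L_{1})-\int_{\ell_{1}}^{2L_{1}}t^{-1}\,dt$, producing $O(\log X)$ additional pieces, each of Type I form. The case $j=1$ always falls here because $\ell_{1}=n/m_{1}\geqslant X/Z\gg Z$ (which uses $X\gg Z^{2}U$).

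\textbf{Case B.} Otherwise every one of the $2j\leqslant 6$ variables has size below $Z$. I then form a Type II decomposition by selecting a subset $S$ whose product lies in $[U,V]$. Sorting the sizes $A_{1}\geqslant A_{2}\geqslant\cdots\geqslant A_{2j}$ and setting $P_{t}=A_{1}\cdots A_{t}$, we have $P_{0}=1<U$ and $P_{2j}\asymp X>V$, and successive products grow by a factor $A_{t}\leqslant Z$. A short case analysis exploiting all three hypotheses $X\gg Z^{2}U$, $Z\gg U^{2}$, $V^{3}\gg X$ shows that some partial product (or some product obtained by inserting or removing a single factor) falls in $[U,V]$; the complementary product then supplies $\ell$, with $a(m),b(\ell)\ll m^{\eta},\ell^{\eta}$.

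\emph{Main obstacle.} Step 1 and Step 2 are standard and together produce $O(\log^{6}X)$ pieces; the extra logarithmic losses from Abel summation in Case A account for the $O(\log^{10}X)$ budget in the statement. The genuine technical work lies in the combinatorial verification in Case B: one must check that the three relations $X\gg Z^{2}U$, $Z\gg U^{2}$, $V^{3}\gg X$ are precisely strong enough to rule out every configuration in which all subset products simultaneously avoid $[U,V]$. This is elementary but requires an exhaustive case check on the $\leqslant 64$ subset products arising from the $\leqslant 6$ factors, and is the step where the specific numerical hypotheses of the lemma are actually used.
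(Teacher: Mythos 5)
The paper does not actually prove this statement: it is quoted (with $\gg$ in place of Heath--Brown's explicit constants $64$, $4$, $32$) from Lemma 3 of Heath--Brown's 1983 paper, whose own proof goes through a Vaughan-type identity rather than through ``Heath--Brown's identity''. Your route --- the identity with $k=3$ and $z=Z$, dyadic splitting, then classification of each piece --- is the standard modern alternative and can be made to work; Steps 1 and 2 and your Case A are fine, including the observation that $j=1$ is always Type I because $\ell_1\geqslant X/Z\gg Z$ follows from $X\gg Z^2U$.

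The genuine gap is Case B, which you yourself flag as ``the genuine technical work'' and then merely assert. This is not a routine verification, and the usual shortcut fails here: the argument ``take the minimal partial product that reaches $U$; it is less than $U\cdot\max A_i$'' does not close, because in Case B a factor may be as large as $Z$, and the hypotheses give neither $UZ\ll V$ nor even $U^2\ll V$ (they give $U^2\ll Z$, but $V<Z$). A correct case analysis consumes each hypothesis at a specific spot. Writing $a_i=\log A_i/\log X$, $u=\log U/\log X$, $v=\log V/\log X$, $\zeta=\log Z/\log X$: the condition $V^3\gg X$ (i.e. $3v\geqslant1$) forces at most two factors to exceed $V$; if exactly two do, the product of the remaining factors lies between $X/(4Z^2)\gg U$ (by $X\gg Z^2U$) and $X/V^2\ll V$ (by $V^3\gg X$ again), so it is the Type II variable; if at most one factor exceeds $V$, then the combination of $X\gg Z^2U$ and $Z\gg U^2$ gives $u\leqslant 1/5$, which bounds by two the length of the minimal decreasing prefix of the small factors reaching $U$, and if that prefix overshoots $V$ one checks that its complement among the small factors lands in $[U,V]$. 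None of this appears in your write-up, and since these are exactly the places where the three numerical hypotheses are spent, the proof as written does not establish the lemma. Two smaller points: the dyadic splitting loses factors of at most $2^6$, which must be absorbed into the $\gg$, $\ll$ of the hypotheses and conclusion; and the Type I inner sum must retain the truncation $X<m\ell\leqslant 2X$ (or be split once more), since $(L,2L]$ is not a full range of summation for every $m$.
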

\begin{proof}
 See Lemma 3 of Heath--Brown \cite{Heath-Brown-1983}.  $\hfill$
\end{proof}

\begin{lemma}\label{4-variables-Type-I}
Suppose that $\tau<|x|<K,\, M\ll X^{\frac{2971}{5334}},a(m)\ll m^\eta,ML\asymp X$, then we have
\begin{equation*}
 S_I(M,L):=\sum_{M<m\leqslant2M}\sum_{L<\ell\leqslant2L}a(m)e(xm^c\ell^c)\ll X^{\frac{2515}{2667}+\eta}.
\end{equation*}
\end{lemma}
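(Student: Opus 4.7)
The plan is to handle the Type I sum $S_I(M,L) = \sum_m a(m)\sum_\ell e(xm^c\ell^c)$ by splitting the range of $M$ at an intermediate threshold and applying different techniques in each part: a single-variable exponent-pair estimate on the inner $\ell$-sum for small $M$, and the Sargos--Wu bilinear bound (Lemma \ref{Sargos-Wu-exponential-sum}) for larger $M$. The phase $f(\ell) = xm^c\ell^c$ is smooth with $|f^{(j)}(\ell)| \asymp \lambda_1 L^{1-j}$, where $\lambda_1 = |x|m^cL^{c-1}$; combined with $\tau<|x|<K$ and $ML\asymp X$, this gives $X^{1-\eta}\ll\lambda_1\ll X^{c+\eta}$ uniformly in $m\in(M,2M]$.

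For $M\leqslant X^{608/5334}$, I would apply Lemma \ref{yijie-exp-pair} with the exponent pair $(\kappa,\lambda)=(1/2,1/2)$, bounding the inner sum by $\ll K^{1/2}M^{c/2}L^{c/2}$ (the $\lambda_1^{-1}$ contribution being negligible thanks to the lower bound on $\lambda_1$), and then sum over $m$ via $|a(m)|\ll m^\eta$ to obtain $S_I \ll X^\eta\cdot M\cdot X^{c/2}$. Since $5030/5334 - c/2 = 1451/5334$, this yields the required bound $X^{2515/2667+\eta}$ comfortably throughout the range $M\leqslant X^{608/5334}$.

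For $X^{608/5334}<M\ll X^{2971/5334}$, I would apply Lemma \ref{Sargos-Wu-exponential-sum} with $\alpha=\beta=c$, coefficients $a_m = a(m)$ and $b_\ell\equiv 1$ (absorbing the $M^\eta$ factor of $a(m)$ into the overall $X^\eta$), and $F=|x|M^cL^c=|x|X^c$, so that $X^{1-\eta}\ll F\ll X^{c+\eta}$. Writing $M=X^a$ and $L=X^{1-a}$ with $a\in(608/5334,\,2971/5334]$, one substitutes $F\ll X^{c+\eta}$ into the ten terms of nonnegative $F$-exponent and $F\gg X^{1-\eta}$ into the term $F^{-1/2}ML$, then verifies by direct arithmetic (with $c=1193/889=7158/5334$) that each of the resulting exponents in $X$ lies at most $5030/5334 = 2515/2667$.

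The main obstacle is the baseline term $M^{1/2}L = X^{1-a/2}$ in Lemma \ref{Sargos-Wu-exponential-sum}: enforcing $1-a/2\leqslant 5030/5334$ forces $a\geqslant 608/5334$, which is exactly where the Sargos--Wu regime must begin, with the exponent-pair argument taking over below this threshold. Among the remaining terms, most are tightest at the upper endpoint $a=2971/5334$ (the single exception is $(F^4M^{31}L^{34})^{1/42}$, tightest at $a=608/5334$ since there $q<r$), and each reduces to an elementary numerical inequality; the sharpest is $(F^2M^7L^6)^{1/10}\ll X^{49291/53340+\eta}$ compared with the target $X^{50300/53340}$, so the bound holds with a slim but positive margin, giving the claimed estimate uniformly in $M$ and $x$.
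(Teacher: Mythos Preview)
Your argument is correct and follows the same two-regime strategy as the paper: an exponent pair on the inner $\ell$-sum for small $M$, then Sargos--Wu (Lemma~\ref{Sargos-Wu-exponential-sum}) for larger $M$. Your term-by-term verification of the eleven Sargos--Wu contributions on $608/5334\leqslant a\leqslant 2971/5334$ checks out, and the observation that $M^{1/2}L=X^{1-a/2}$ forces the lower threshold $a=608/5334$ is exactly right.

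The only substantive difference from the paper is the choice of exponent pair and hence of threshold. You use $(\kappa,\lambda)=B(0,1)=(\tfrac12,\tfrac12)$, giving $S_I\ll X^{c/2+\eta}M$, which is acceptable only up to $a\leqslant 1451/5334$ and so suffices on $a\leqslant 608/5334$. The paper instead takes $(\kappa,\lambda)=A^2B(0,1)=(\tfrac{1}{14},\tfrac{11}{14})$, obtaining $S_I\ll X^{c/14+5/7+\eta}M^{2/7}$, which stays below $X^{2515/2667}$ all the way to $a=4961/10668$; this lets the paper postpone the switch to Sargos--Wu until the larger threshold $M\asymp X^{4961/10668}$. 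Both choices close, and your version has the virtue of using the simplest possible pair; the paper's sharper pair merely shrinks the range over which the heavier Sargos--Wu machinery must be invoked.
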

\begin{proof}
If $M\ll X^{\frac{4961}{10668}}$, then by Lemma \ref{yijie-exp-pair} with the exponential pair $(\kappa,\lambda)=A^2B(0,1)=(\frac{1}{14},\frac{11}{14})$,
 we deduce that
\begin{align*}
  S_I(M,L) \ll & \,\, X^\eta\sum_{M<m\leqslant2M}\Bigg|\sum_{L<\ell\leqslant2L}e(xm^c\ell^c)\Bigg|
                               \nonumber  \\
    \ll & \,\, X^\eta\sum_{M<m\leqslant2M}\bigg(\big(|x|X^cL^{-1}\big)^\frac{1}{14}L^{\frac{11}{14}}+\frac{1}{|x|X^cL^{-1}}\bigg)
                               \nonumber  \\
     \ll & \,\, X^\eta\Big(K^{\frac{1}{14}}X^{\frac{c}{14}}ML^{\frac{5}{7}}+\tau^{-1}X^{1-c}\Big)
                                \nonumber  \\
     \ll & \,\, X^{\frac{c}{14}+\frac{5}{7}+\eta}M^{\frac{2}{7}}\ll  X^{\frac{2515}{2667}+\eta}.
\end{align*}
If $X^{\frac{4961}{10668}}\ll M \ll X^{\frac{2971}{5334}}$, then by Lemma \ref{Sargos-Wu-exponential-sum} with $(m,\ell)=(m,\ell)$, we obtain 
\begin{equation*}
  S_I(M,L)\ll X^{\frac{2515}{2667}+\eta},
\end{equation*}
which completes the proof of Lemma \ref{4-variables-Type-I}.      $\hfill$
\end{proof}

\begin{lemma}\label{4-variables-Type-II}
Suppose that  $\tau<|x|<K,X^{\frac{304}{2667}}\ll M\ll X^{\frac{1147}{2667}}, a(m)\ll m^\eta,b(\ell)\ll \ell^\eta, ML\asymp X$. Then we have
\begin{equation*}
 \mathcal{S}_{II}(M,L):=\sum_{M<m\leqslant2M}\sum_{L<\ell\leqslant2L}a(m)b(\ell)e(xm^c\ell^c)\ll X^{\frac{2515}{2667}+\eta}.
\end{equation*}
\end{lemma}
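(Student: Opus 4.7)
My strategy is the classical Cauchy--Schwarz $+$ Fouvry--Iwaniec differencing $+$ Sargos--Wu route for Type II bilinear sums. Split
\[
\mathcal{S}_{II}(M,L)=\sum_{L<\ell\leqslant 2L}b(\ell)\,Y_\ell,
\qquad
Y_\ell:=\sum_{M<m\leqslant 2M}a(m)e\bigl(xm^c\ell^c\bigr),
\]
and apply Cauchy--Schwarz in $\ell$ to remove $b(\ell)$, giving $|\mathcal{S}_{II}(M,L)|^{2}\ll L^{1+\eta}\sum_\ell|Y_\ell|^{2}$. I then invoke Lemma \ref{Fouvry-Iwaniec-chafen} on $|Y_\ell|^{2}$ with $z_m=a(m)e(xm^c\ell^c)$, $\mathcal{L}=M$ and a free parameter $Q\in[1,M]$, obtaining
\[
|Y_\ell|^{2}\ll \bigl(1+M/Q\bigr)\sum_{|q|<Q}\sum_{m}a(m+q)\overline{a(m-q)}\,e\bigl(x\ell^{c}g_q(m)\bigr),
\qquad g_q(m):=(m+q)^{c}-(m-q)^{c}.
\]
Interchanging the $\ell$- and $q$-summations splits the resulting expression into the diagonal $q=0$ and the off-diagonal $q\neq 0$.

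The diagonal $q=0$ is elementary: it contributes $|\mathcal{S}_{II}(M,L)|\ll LM^{1/2+\eta}+LM^{1+\eta}Q^{-1/2}=XM^{-1/2+\eta}+XQ^{-1/2+\eta}$, which is $\ll X^{2515/2667+\eta}$ precisely when $M\gg X^{304/2667}$ (the stated hypothesis) and $Q\gg X^{304/2667}$. For the off-diagonal $1\leqslant q<Q$ I need to bound the bilinear sum
\[
T(q)=\sum_{M<m\leqslant 2M}\sum_{L<\ell\leqslant 2L}a(m+q)\overline{a(m-q)}\,e\bigl(x\ell^{c}g_q(m)\bigr).
\]
Using the Taylor expansion $g_q(m)=2cq\,m^{c-1}\bigl(1+O(q^{2}/m^{2})\bigr)$, the leading phase is the pure monomial $2cq\,x\,m^{c-1}\ell^{c}$, and the smooth $O(q^{2}/m^{2})$ correction is removed by partial summation in $m$. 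I then apply Lemma \ref{Sargos-Wu-exponential-sum} with $(\alpha,\beta)=(c-1,c)$ and $F=F_q:=2cq|x|M^{c-1}L^{c}$; the non-degeneracy hypothesis $\alpha\beta(\alpha-1)(\beta-1)(\alpha-2)(\beta-2)\neq 0$ holds since $1<c<1193/889<2$.

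Finally, sum the Sargos--Wu bound over $1\leqslant q<Q$ (exploiting $F_q\asymp qF_1$) and reintroduce the prefactor $L^{1+\eta}(1+M/Q)$ from Cauchy--Schwarz and Fouvry--Iwaniec. This produces eleven candidate bounds for $|\mathcal{S}_{II}(M,L)|^{2}$, each a monomial in $X$, $M$, $L=X/M$ and $Q$. The remaining task is to choose $Q$ in the window $X^{304/2667}\ll Q\ll M$ so that, after inserting $ML\asymp X$ and $|x|\leqslant K=\log^{10}X$, every one of the eleven terms (together with the trivial tail terms $M^{1/2}L$, $ML^{1/2}$, $F^{-1/2}ML$ of Sargos--Wu, which are easily handled) is $\ll X^{2\cdot 2515/2667+\eta}$ throughout $X^{304/2667}\ll M\ll X^{1147/2667}$. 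I expect this optimisation to be the main obstacle: each of the eleven Sargos--Wu terms imposes its own inequality on the exponents of $M$ and $Q$, and verifying that the tightest of these collectively forces the upper limit $M\ll X^{1147/2667}$ (and no more restrictive) reduces to a delicate but elementary arithmetic that crucially uses $c<1193/889$.
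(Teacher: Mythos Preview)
Your Cauchy--Schwarz step and the Fouvry--Iwaniec differencing agree with the paper, and your treatment of the diagonal $q=0$ is correct. Where you diverge is in the off-diagonal. You propose to keep the double $(m,\ell)$-sum $T(q)$ intact, Taylor-expand $g_q(m)$ to a monomial, clean up by partial summation, and then invoke Sargos--Wu (Lemma~\ref{Sargos-Wu-exponential-sum}); you then face an eleven-term optimisation in $(M,Q)$ that you explicitly leave open.

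The paper takes a much shorter path. After differencing, the $\ell$-variable carries the trivial weight $1$, so one simply applies the triangle inequality in $m$ and estimates the one-variable sum
\[
\mathfrak{S}_0=\sum_{L<\ell\leqslant 2L}e\bigl(x\ell^{c}g_q(m)\bigr)
\]
by the exponent pair $AB(0,1)=(\tfrac{1}{6},\tfrac{2}{3})$ from Lemma~\ref{yijie-exp-pair}. This yields only two terms, $(|x|X^{c-1}q)^{1/6}L^{2/3}$ and $(|x|X^{c-1}q)^{-1}$. With the concrete choice $Q=X^{304/2667}(\log X)^{-1}$ the diagonal contribution $X^{2+\eta}/Q$ dominates both off-diagonal terms throughout the stated range of $M$, and $S_{II}\ll (X^{2+\eta}/Q)^{1/2}\ll X^{2515/2667+\eta}$ follows in a few lines. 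No Taylor expansion, no partial summation, and no Sargos--Wu are needed for the Type~II sums; in the paper Sargos--Wu is used only for the Type~I estimate (Lemma~\ref{4-variables-Type-I}).

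Your route is not wrong in principle, but it is considerably heavier and its decisive numerical verification is missing. The efficiency you are overlooking is that the post-differencing $\ell$-sum is weight-free, so a one-variable exponent pair already suffices and a bilinear estimate buys nothing here.
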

\begin{proof}
Taking $Q=X^{\frac{304}{2667}}(\log X)^{-1}$, if $X^{\frac{304}{2667}}\ll M\ll X^{\frac{1147}{2667}}$, by
Cauchy's inequality and Lemma \ref{Fouvry-Iwaniec-chafen}, we deduce that
\begin{align}\label{Type-II-inner-sum}
   & \,\,  S_{II}(M,L) \ll   \Bigg(\sum_{L<\ell\leqslant2L}|b(\ell)|^2\Bigg)^{\frac{1}{2}}
              \Bigg(\sum_{L<\ell\leqslant2L}\Bigg|\sum_{M<m\leqslant2M}a(m)e(xm^c\ell^c)\Bigg|^2\Bigg)^{\frac{1}{2}}
                     \nonumber  \\
    \ll & \,\, L^{\frac{1}{2}+\eta} \Bigg(\sum_{L<\ell\leqslant2L}\frac{M}{Q}\sum_{0\leqslant q<Q}\bigg(1-\frac{q}{Q}\bigg)
                     \nonumber  \\
    & \qquad \qquad \times\sum_{M+q<m\leqslant2M-q}a(m+q)\overline{a(m-q)}e\Big(x\ell^c\big((m+q)^c-(m-q)^c\big)\Big)\Bigg)^{\frac{1}{2}}
                      \nonumber  \\
    \ll & \,\, L^{\frac{1}{2}+\eta}\Bigg(\frac{M}{Q}\sum_{L<\ell\leqslant2L}\bigg(M^{1+\eta}+\sum_{1\leqslant q<Q}\bigg(1-\frac{q}{Q}\bigg)
                      \nonumber  \\
    & \qquad \qquad \times\sum_{M+q<m\leqslant2M-q}a(m+q)\overline{a(m-q)}e\Big(x\ell^c\big((m+q)^c-(m-q)^c\big)\Big) \bigg)\Bigg)^{\frac{1}{2}}
                      \nonumber  \\
    \ll & \,\, X^\eta\Bigg(\frac{X^2}{Q}+\frac{X}{Q}\sum_{1\leqslant q<Q}\sum_{M<m\leqslant2M}
               \Bigg|\sum_{L<\ell\leqslant2L}e\Big(x\ell^c\big((m+q)^c-(m-q)^c\big)\Big) \Bigg|\Bigg)^{\frac{1}{2}}.
\end{align}
Therefore, it is sufficient to estimate the inner sum
\begin{equation*}
  \mathfrak{S}_0:=\sum_{L<\ell\leqslant2L}e\Big(x\ell^c\big((m+q)^c-(m-q)^c\big)\Big).
\end{equation*}
From Lemma \ref{yijie-exp-pair} with the exponential pair $(\kappa,\lambda)=AB(0,1)=(\frac{1}{6},\frac{2}{3})$, we have
\begin{equation}\label{inner-S_0}
   \mathfrak{S}_0\ll \big(|x|X^{c-1}q\big)^{\frac{1}{6}}L^{\frac{2}{3}}+\frac{1}{|x|X^{c-1}q}.
\end{equation}
Putting the estimate (\ref{inner-S_0}) into (\ref{Type-II-inner-sum}), we deduce that  
\begin{align*}
             S_{II}(M,L)
  \ll & \,\, X^\eta\Bigg(\frac{X^2}{Q}+\frac{X}{Q}\sum_{1\leqslant q<Q}
            \sum_{M<m\leqslant2M}\bigg(\big(|x|X^{c-1}q\big)^{\frac{1}{6}}L^{\frac{2}{3}}+\frac{1}{|x|X^{c-1}q}\bigg)\Bigg)^{\frac{1}{2}}
                           \nonumber  \\
  \ll & \,\, X^\eta\Bigg(\frac{X^2}{Q}+\frac{X}{Q}\Big(K^\frac{1}{6}X^{\frac{c-1}{6}}L^{\frac{2}{3}}MQ^{\frac{7}{6}}
             +\tau^{-1}X^{1-c}M\log Q\Big)\Bigg)^{\frac{1}{2}}
                             \nonumber  \\
   \ll & \,\, \big(X^{2+\eta}Q^{-1}\big)^{\frac{1}{2}}\ll X^{\frac{2515}{2667}+\eta},
\end{align*}
which completes the proof of Lemma \ref{4-variables-Type-II}.   $\hfill$
\end{proof}

\begin{lemma}\label{S(x)-yuqujianguji-4}
For $1<c<\frac{1193}{889}$ and $\tau<|x|<K$, we have
\begin{equation*}
 S(x)\ll X^{\frac{2515}{2667}+\eta}.
\end{equation*}
\end{lemma}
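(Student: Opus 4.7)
The plan is to reduce $S(x)$ to a von Mangoldt exponential sum, apply Heath--Brown's identity (Lemma \ref{Heath-Brown-exponent-sum-fenjie}) to split it into Type I and Type II pieces, and bound each piece using the two preparatory lemmas \ref{4-variables-Type-I} and \ref{4-variables-Type-II} that have just been proved.

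First I would replace $\sum_{X<p\leqslant 2X}(\log p)e(p^cx)$ by $\sum_{X<n\leqslant 2X}\Lambda(n)e(n^cx)$; the discrepancy comes from prime powers $p^k$ with $k\geqslant 2$, contributes $O(X^{1/2}\log X)$, and is absorbed into the target bound $X^{2515/2667+\eta}$.

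Next I would apply Lemma \ref{Heath-Brown-exponent-sum-fenjie} with
\[
U = X^{304/2667}, \qquad V = X^{1147/2667}, \qquad Z = X^{2363/5334} + \tfrac{1}{2},
\]
so that the Type I condition $L\gg Z$ forces $M\asymp X/L \ll X^{2971/5334}$, matching exactly the hypothesis of Lemma \ref{4-variables-Type-I}, while the Type II range $U\ll M\ll V$ is precisely the range $X^{304/2667}\ll M\ll X^{1147/2667}$ required by Lemma \ref{4-variables-Type-II}. The hypotheses of Heath--Brown's decomposition are easily checked: writing all exponents over the common denominator $5334$, one has $U<V<Z$ because $608<2294<2363$; $V^3\gg X$ because $3\cdot 1147 = 3441 > 2667$; $Z\gg U^2$ because $2363/5334>1216/5334$; and $X\gg Z^2U$ because $2\cdot(2363/5334) + 304/2667 = (4726+608)/5334 = 1$, with the exponents balancing exactly. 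After this, $S(x)$ is written (up to the negligible error above) as a sum of $O(\log^{10} X)$ Type I and Type II sums.

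Finally I would invoke Lemma \ref{4-variables-Type-I} on each Type I sum and Lemma \ref{4-variables-Type-II} on each Type II sum, each giving the bound $X^{2515/2667+\eta}$; multiplying by the number $O(\log^{10}X)$ of subsums and absorbing the logarithm into $X^\eta$ yields the desired estimate. The real labour has already been done in the Type I and Type II lemmas, and the only non-trivial point at this stage is to choose $U,V,Z$ so that Heath--Brown's three constraints and the parameter ranges of Lemmas \ref{4-variables-Type-I} and \ref{4-variables-Type-II} are all satisfied simultaneously; the identity $2\cdot 2363/5334 + 304/2667 = 1$ shows that this choice is essentially forced, and in that sense the exponent $\frac{2515}{2667}$, and thereby the range $1<c<\frac{1193}{889}$, is dictated by the matching of the Type I/Type II thresholds in Heath--Brown's decomposition.
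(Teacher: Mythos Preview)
Your proposal is correct and follows essentially the same argument as the paper's proof: pass from $S(x)$ to the von Mangoldt sum, apply Heath--Brown's decomposition with exactly the parameters $U=X^{304/2667},\,V=X^{1147/2667},\,Z\asymp X^{2363/5334}$, and feed the resulting Type I and Type II sums into Lemmas~\ref{4-variables-Type-I} and~\ref{4-variables-Type-II}. The only slip is that you should take $Z=\big[X^{2363/5334}\big]+\tfrac{1}{2}$ rather than $X^{2363/5334}+\tfrac{1}{2}$, so that the hypothesis $Z-\tfrac{1}{2}\in\mathbb{N}$ of Lemma~\ref{Heath-Brown-exponent-sum-fenjie} is actually met.
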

\begin{proof}
Trivially, we have
\begin{equation}\label{S=U+error-4}
  S(x)=\mathfrak{U}(x)+O(X^{1/2}),
\end{equation}
where
\begin{equation*}
  \mathfrak{U}(x)=\sum_{X<n\leqslant2X}\Lambda(n)e(n^cx).
\end{equation*}
Taking $U=X^{\frac{304}{2667}},V=X^{\frac{1147}{2667}},Z=\big[X^{\frac{2363}{5334}}\big]+\frac{1}{2}$ in Lemma \ref{Heath-Brown-exponent-sum-fenjie},
it is not difficult to see that the sum
\begin{equation*}
 \sum_{X<n\leqslant2X}\Lambda(n)e(n^cx)
\end{equation*}
can be written into $O(\log^{10}X)$ sums, each of which either of Type I:
\begin{equation*}
  S_{I}(M,L)=\sum_{M<m\leqslant2M}\sum_{L<\ell\leqslant2L}a(m)e(xm^c\ell^c)
\end{equation*}
with $L\gg Z, a(m)\ll m^\eta, ML\asymp X$, or of Type II:
\begin{equation*}
  S_{II}(M,L)=\sum_{M<m\leqslant2M}\sum_{L<\ell\leqslant2L}a(m)b(\ell)e(xm^c\ell^c)
\end{equation*}
with $U\ll M\ll V,a(m)\ll m^\eta,b(\ell)\ll \ell^\eta, ML\asymp X$. For the sums of Type I, since $L\gg Z$ and $ML\asymp X$, 
we get $M\ll X^{\frac{2971}{5334}}$. By Lemma \ref{4-variables-Type-I}, we have $S_I(M,L)\ll X^{\frac{2515}{2667}+\eta}$. For the sums of Type II, 
by Lemma \ref{4-variables-Type-II},  we get $S_{II}(M,L)\ll X^{\frac{2515}{2667}+\eta}$. Thus, we deduce that
\begin{equation}\label{U-guji-4}
 \sum_{X<n\leqslant2X}\Lambda(n)e(n^cx)\ll X^{\frac{2515}{2667}+\eta}.
\end{equation}
From (\ref{S=U+error-4}) and (\ref{U-guji-4}), we finish the proof of Lemma \ref{S(x)-yuqujianguji-4}.  $\hfill$
\end{proof}

\section{Proof of Theorem \ref{Theorem-four-primes-variables} }

In this section, we use $\Phi(x)$ and $\phi(y)$ to denote the functions which appear in Lemma \ref{xiaobei-lemma} with parameter $a=\frac{9\varepsilon}{10},b=\frac{\varepsilon}{10},r=[\log X]$. Define
\begin{equation*}
  \mathscr{B}_4(N)=\sum_{\substack{X<p_1,p_2,p_3,p_4\leqslant2X \\ |p_1^c+\cdots+p_4^c-N|<\varepsilon }}\prod_{j=1}^4\log p_j.
\end{equation*}
From the property of $\phi(y)$, we get $\mathscr{B}_4(N)\geqslant \mathscr{C}_{4}(N)$, where
\begin{equation*}
  \mathscr{C}_4(N)=\sum_{X<p_1,p_2,p_3,p_4\leqslant2X }\Bigg(\prod_{j=1}^4\log p_j\Bigg)\phi(p_1^c+\cdots+p_4^c-N).
\end{equation*}
From the Fourier transformation formula, we derive that
\begin{align}\label{C_5(N)-fenie}
 \mathscr{C}_4(N)= & \,\, \sum_{X<p_1,\dots,p_4\leqslant2X}\Bigg(\prod_{j=1}^4\log p_j\Bigg)
                         \int_{-\infty}^{+\infty}e\big((p_1^c+\cdots+p_4^c-N)y\big)\Phi(y)\mathrm{d}y
                               \nonumber \\
 = & \,\,  \int_{-\infty}^{+\infty}S^4(x)\Phi(x)e(-Nx)\mathrm{d}x
                               \nonumber \\
 =& \,\, \bigg(\int_{|x|\leqslant\tau}+\int_{\tau<|x|<K}+\int_{|x|\geqslant K}\bigg)S^4(x)\Phi(x)e(-Nx)\mathrm{d}x
                                \nonumber \\
 =& \,\, \mathscr{C}_4^{(1)}(N)+\mathscr{C}_4^{(2)}(N)+\mathscr{C}_4^{(3)}(N),\quad \textrm{say}.
\end{align}
\subsection{The Estimate of $\mathscr{C}_4^{(1)}(N)$}
Define
\begin{align*}
  & \,\, \mathscr{H}_4(N)=\int_{-\infty}^{+\infty} I^4(x)\Phi(x)e(-Nx)\mathrm{d}x,  \\
  & \,\, \mathscr{H}_\tau(N)=\int_{-\tau}^{+\tau} I^4(x)\Phi(x)e(-Nx)\mathrm{d}x.
\end{align*}
From Lemma \ref{xiaobei-lemma} and Lemma \ref{Titchmarsh-42}, we derive that
\begin{align}\label{H_tau-H-4}
  \big|\mathscr{H}_4(N)-\mathscr{H}_\tau(N)\big|\ll \int_{\tau}^\infty|I(x)|^4|\Phi(x)|\mathrm{d}x
              \ll \varepsilon\int_{\tau}^{\infty}\bigg(\frac{1}{|x|X^{c-1}}\bigg)^4\mathrm{d}x\ll  \varepsilon X^{4-c-\eta}.
\end{align}
From Lemma \ref{Mu-lemma-Tolev-lemma}, Lemma \ref{S(x)=I(x)+jieyu} and the trivial estimate $S(x)\ll X$, we get
\begin{align}\label{C_4^(1)-H_tau}
   & \,\, \big|\mathscr{C}_4^{(1)}(N)-\mathscr{H}_\tau(N)\big|\leqslant\int_{-\tau}^{+\tau}\big|S^4(x)-I^4(x)\big|\big|\Phi(x)\big|\mathrm{d}x
                     \nonumber \\
   \ll & \,\, \varepsilon\cdot\int_{-\tau}^{+\tau}\big|S(x)-I(x)\big|\big(|S(x)|^3+|I(x)|^3\big)\mathrm{d}x
                     \nonumber \\
   \ll & \,\, \varepsilon\cdot X\exp\big(-(\log X)^{1/5}\big)\bigg(\int_{-\tau}^{+\tau}|S(x)|^3\mathrm{d}x+\int_{-\tau}^{+\tau}|I(x)|^3\mathrm{d}x\bigg)
                     \nonumber \\
   \ll & \,\, \varepsilon X^{4-c}\exp\big(-(\log X)^{1/6}\big).
\end{align}
It follows from Lemma \ref{4-power-main-low-bound}, (\ref{H_tau-H-4}) and (\ref{C_4^(1)-H_tau}) that
\begin{equation}\label{C_4^(1)(N)-lower-bound}
  \mathscr{C}_4^{(1)}(N)=\big( \mathscr{C}_4^{(1)}(N)-\mathscr{H}_\tau(N)\big)+\big(\mathscr{H}_\tau(N)-\mathscr{H}_4(N)\big)+\mathscr{H}_4(N)
                       \gg \varepsilon  X^{4-c}.
\end{equation}

\subsection{The Estimate of $\mathscr{C}_4^{(2)}(N)$}
According to the definition of $\mathscr{C}_4^{(2)}(N)$, we obtain
\begin{align*}
  \big|\mathscr{C}_4^{(2)}(N)\big|=& \,\, \Bigg|\sum_{X<p\leqslant2X}(\log p)\int_{\tau<|x|<K}e(p^cx)S^3(x)\Phi(x)e(-Nx)\mathrm{d}x\Bigg|
                     \nonumber \\
  \leqslant & \,\, \sum_{X<p\leqslant2X}(\log p)\Bigg|\int_{\tau<|x|<K}e(p^cx)S^3(x)\Phi(x)e(-Nx)\mathrm{d}x\Bigg|
                     \nonumber \\
  \ll & \,\,(\log X)\sum_{X<n\leqslant2X}\Bigg|\int_{\tau<|x|<K}e(n^cx)S^3(x)\Phi(x)e(-Nx)\mathrm{d}x\Bigg|.
\end{align*}
By Cauchy's inequality, we deduce that
\begin{align}\label{C_4^(2)-Cauchy-upper}
          &\,\,     \big|\mathscr{C}_4^{(2)}(N)\big|
   \ll  \, X^{\frac{1}{2}}(\log X)\Bigg(\sum_{X<n\leqslant2X}\Bigg|\int_{\tau<|x|<K}e(n^cx)S^3(x)\Phi(x)e(-Nx)\mathrm{d}x\Bigg|^2\Bigg)^{\frac{1}{2}}
                       \nonumber \\
   = & \, X^{\frac{1}{2}}(\log X)\Bigg(\sum_{X<n\leqslant2X}\int_{\tau<|x|<K}e(n^cx)S^3(x)\Phi(x)e(-Nx)\mathrm{d}x
                            \nonumber \\
   & \,\, \qquad \qquad\qquad\qquad\qquad\qquad\times\int_{\tau<|y|<K}\overline{e(n^cy)S^3(y)\Phi(y)e(-Ny)}\mathrm{d}y\Bigg)^{\frac{1}{2}}
                            \nonumber \\
   = & \, X^{\frac{1}{2}}(\log X)\Bigg(\int_{\tau<|y|<K}\!\!\!\overline{S^3(y)\Phi(y)e(-Ny)}\mathrm{d}y
                 \int_{\tau<|x|<K}\!\!\!S^3(x)\Phi(x)e(-Nx)\mathcal{T}(x-y)\mathrm{d}x\Bigg)^{\frac{1}{2}}
                            \nonumber \\
   \ll & \,\, X^{\frac{1}{2}}(\log X)\Bigg(\int_{\tau<|y|<K}\big|S^3(y)\Phi(y)\big|\mathrm{d}y
             \int_{\tau<|x|<K}\big|S^3(x)\Phi(x)\mathcal{T}(x-y)\big|\mathrm{d}x\Bigg)^{\frac{1}{2}}.
\end{align}
For the inner integral in (\ref{C_4^(2)-Cauchy-upper}), we get
\begin{align}\label{4-inner-integral-fenjie}
  & \,\, \int_{\tau<|x|<K}\big|S^3(x)\Phi(x)\mathcal{T}(x-y)\big|\mathrm{d}x
                        \nonumber \\
  \ll & \,\, \int_{\substack{\tau<|x|<K \\ |x-y|\leqslant X^{-c}}}\big|S^3(x)\Phi(x)\mathcal{T}(x-y)\big|\mathrm{d}x
               +\int_{\substack{\tau<|x|<K \\ X^{-c}<|x-y|\leqslant2K}}\big|S^3(x)\Phi(x)\mathcal{T}(x-y)\big|\mathrm{d}x.
\end{align}
From Lemma \ref{xiaobei-lemma}, Lemma \ref{S(x)-yuqujianguji-4} and the trivial estimate $\mathcal{T}(x-y)\ll X$, we get 
\begin{align}\label{4-inner-integral-fenjie-1}
  & \,\, \int_{\substack{\tau<|x|<K \\ |x-y|\leqslant X^{-c}}}\big|S^3(x)\Phi(x)\mathcal{T}(x-y)\big|\mathrm{d}x
                               \nonumber \\
   \ll & \,\, \varepsilon X\times \sup_{\tau<|x|<K} |S(x)|^3\times   \int_{\substack{\tau<|x|<K \\ |x-y|\leqslant X^{-c}}} \mathrm{d}x
                                \nonumber \\
   \ll & \,\, \varepsilon X\cdot X^{\frac{2515}{889}-c+\eta}\ll \varepsilon X^{\frac{3404}{889}-c+\eta}.
\end{align}
According to Lemma \ref{yijie-exp-pair}, for $X^{-c}<|x-y|\leqslant2K$, we get
\begin{align}\label{T(x)-expo-pair-abs-4}
 \mathcal{T}(x-y) \ll & \,\, \big(|x-y|X^{c-1}\big)^\kappa X^\lambda+\frac{1}{|x-y|X^{c-1}}
                                   \nonumber \\
   \ll & \,\, |x-y|^\kappa X^{\kappa c+\lambda-\kappa}+\frac{1}{|x-y|X^{c-1}} .
\end{align}
By choosing 
\begin{equation*}
  (\kappa,\lambda)= BA^3 BA^2  BA   BA BA^2 BA  BA B(0,1)=\bigg(\frac{1731}{4492},\frac{591}{1123}\bigg)
\end{equation*}
in (\ref{T(x)-expo-pair-abs-4}), we deduce that
\begin{align}\label{T(x)-expo-pair-explicit-4}
 \mathcal{T}(x)
   \ll & \,\, |x-y|^\frac{1731}{4492} X^{\frac{1731c+633}{4492}}+\frac{1}{|x-y|X^{c-1}} .
\end{align}
On the other hand, by Lemma \ref{Mu-lemma-Tolev-lemma} and Cauchy's inequality, we obtain
\begin{align}\label{S^3(x)-mean}
     & \,\,  \int_{\tau<|x|<K}\big|S^3(x)\Phi(x)\big|\mathrm{d}x
                    \nonumber \\
\ll & \,\,\bigg(\int_{\tau<|x|<K}\big|S^2(x)\Phi(x)\big|\mathrm{d}x\bigg)^{\frac{1}{2}}
           \bigg(\int_{\tau<|x|<K}\big|S^4(x)\Phi(x)\big|\mathrm{d}x\bigg)^{\frac{1}{2}}
                    \nonumber \\
\ll & \big(X^{1+\eta}\big)^{\frac{1}{2}}\cdot\big(X^{4-c+\eta}\big)^{\frac{1}{2}}\ll X^{\frac{5-c}{2}+\eta}.
\end{align}
By (\ref{T(x)-expo-pair-explicit-4}), (\ref{S^3(x)-mean}) and Lemma \ref{S(x)-yuqujianguji-4}, we have
\begin{align}\label{4-inner-integral-fenjie-2}
 & \,\, \int_{\substack{\tau<|x|<K \\ X^{-c}<|x-y|\leqslant2K}}\big|S^3(x)\Phi(x)\mathcal{T}(x-y)\big|\mathrm{d}x
                     \nonumber \\
 \ll & \,\, \int_{\substack{\tau<|x|<K \\ X^{-c}<|x-y|\leqslant2K}}\big|S^3(x)\Phi(x)\big|
              \bigg(|x-y|^\frac{1731}{4492} X^{\frac{1731c+633}{4492} }+\frac{1}{|x-y|X^{c-1}} \bigg)\mathrm{d}x
                     \nonumber \\
 \ll & \,\, X^{\frac{1731c+633}{4492}+\eta}\int_{\tau<|x|<K}\!\!\big|S^3(x)\Phi(x)\big|\mathrm{d}x
                      \nonumber \\
    & \,\,  \qquad+\varepsilon X^{1-c}\times \sup_{\tau<|x|<K}|S(x)|^3\times \int_{\substack{\tau<|x|<K \\ X^{-c}<|x-y|\leqslant2K}}\frac{\mathrm{d}x}{|x-y|}
                        \nonumber \\
 \ll & \,\, X^{\frac{1731c+633}{4492}+\eta}\cdot X^{\frac{5-c}{2}+\eta}+\varepsilon X^{1-c}\cdot X^{\frac{2515}{889}+\eta}
                       \nonumber \\
  \ll & \,\,X^{\frac{11863-515c}{4492}+\eta}+ \varepsilon X^{\frac{3404}{889}-c+\eta} \ll \varepsilon X^{\frac{3404}{889}-c+\eta}.
\end{align}
From (\ref{4-inner-integral-fenjie}), (\ref{4-inner-integral-fenjie-1}) and (\ref{4-inner-integral-fenjie-2}), we get
\begin{equation*}
  \int_{\tau<|x|<K}\big|S^3(x)\Phi(x)\mathcal{T}(x-y)\big|\mathrm{d}x\ll\varepsilon X^{\frac{3404}{889}-c+\eta},
\end{equation*}
from which and (\ref{S^3(x)-mean}), we can conclude that
\begin{equation}\label{C_4^(2)(N)-upper-bound}
  \big|\mathscr{C}_4^{(2)}(N)\big|\ll X^{\frac{1}{2}}(\log X)\Big(X^{\frac{5-c}{2}+\eta}\cdot \varepsilon X^{\frac{3404}{889}-c+\eta}\Big)^{\frac{1}{2}}
    \ll \varepsilon X^{4-c-\eta}.
\end{equation}

\subsection{The Estimate of $\mathscr{C}_4^{(3)}(N)$}
According to Lemma \ref{xiaobei-lemma}, we have
\begin{align}\label{C_4^(3)(N)-upper-bound}
        \big|\mathscr{C}_4^{(3)}(N)\big|\ll & \,\, \int_{K}^\infty |S(x)|^{4}|\Phi(x)|\mathrm{d}x
           \ll   X^4\int_{K}^\infty\frac{1}{\pi|x|}\bigg(\frac{r}{2\pi|x|b}\bigg)^r\mathrm{d}x
                     \nonumber \\
        \ll & \,\, X^4\bigg(\frac{r}{2\pi b}\bigg)^r\int_K^\infty\frac{\mathrm{d}x}{x^{r+1}}\ll \frac{X^4}{r}\bigg(\frac{r}{2\pi Kb}\bigg)^r
                      \nonumber \\
        \ll  & \,\,\frac{X^4}{\log X}\cdot\bigg(\frac{1}{2\pi\log^7X}\bigg)^{\log X}
                   \ll \frac{X^4}{X^{7\log\log X+\log(2\pi)}}\ll 1.
\end{align}

\subsection{Proof of Theorem \ref{Theorem-four-primes-variables}}
From (\ref{C_5(N)-fenie}), (\ref{C_4^(1)(N)-lower-bound}), (\ref{C_4^(2)(N)-upper-bound}) and (\ref{C_4^(3)(N)-upper-bound}), we deduce that
\begin{equation*}
\mathscr{C}_4(N)=\mathscr{C}_4^{(1)}(N)+\mathscr{C}_4^{(2)}(N)+\mathscr{C}_4^{(3)}(N)\gg \varepsilon X^{4-c},
\end{equation*}
and thus
\begin{equation*}
\mathscr{B}_4(N)  \geqslant\mathscr{C}_4(N) \gg \varepsilon X^{4-c}\gg \frac{X^{4-c}}{\log^2X},
\end{equation*}
which completes the proof of Theorem \ref{Theorem-four-primes-variables}.

\section*{Acknowledgement}


The authors would like to express the most sincere gratitude to Professor Wenguang Zhai for his valuable advices and constant encouragement.

\end{document}